\newtheorem{Theorem}{Theorem}[section]
\newtheorem{Proposition}[Theorem]{Proposition}
\newtheorem{Remark}[Theorem]{Remark}
\newtheorem{Lemma}[Theorem]{Lemma}
\newtheorem{Corollary}[Theorem]{Corollary}
\newtheorem{Definition}[Theorem]{Definition}
\newtheorem{Example}[Theorem]{Example}
\let\expandafter\oldproof\csname\string\proof\endcsname
\let\oldendproof\endproof
\renewenvironment{proof}[1][\proofname]{
\oldproof[\ttfamily\scshape \bf #1.]
}{\oldendproof}
\def\emp{\emptyset}  
\def\dom{{\rm dom}\,}
   \def\B{\mathbb
B}   
\def\ox{\overline{x}} \def\ov{\overline{y}} 
\def\tto{\rightrightarrows} \def\Hat{\widehat}
\def\ra{\rangle}
\def\la{\langle} 
\def\ox{\bar{x}} 
\def\oy{\bar{y}} \def\ov{\bar{v}} 
 \def\ov{\bar{v}}
 \def\inte{\mbox{\rm int}\,}
\def\gph{\mbox{\rm gph}\,} 
 \def\dom{\mbox{\rm dom}\,}
  \def\O{\Omega}
 \def\emp{\emptyset} \def\st{\stackrel}
 \def\lm{\lambda}  
\def\al{\alpha}  \def\N{{\rm I\!N}}
\def\R{{\rm I\!R}}
\numberwithin{equation}{section}
\title{\bf
Local Maximal Monotonicity\\ in Variational Analysis and Optimization}
\author{Pham Duy Khanh\footnote{Department of Mathematics, Ho Chi Minh City University of Education, Ho Chi Minh City, Vietnam. E-mail: pdkhanh182@gmail.com} \quad Vu Vinh Huy Khoa\footnote{Department of Mathematics, Wayne State University, Detroit, Michigan, USA. E-mail: khoavu@wayne.edu. Research of this author was partly supported by the US National Science Foundation under grant DMS-2204519.}\quad Boris S. Mordukhovich\footnote{Department of Mathematics, Wayne State University, Detroit, Michigan, USA. E-mail: aa1086@wayne.edu. Research of this author was partly supported by the US National Science Foundation under grants DMS-1808978 and DMS-2204519, by the Australian Research Council under Discovery Project DP-190100555, and by Project~111 of China under grant D21024.}\quad Vo Thanh Phat\footnote{Department of Mathematics, Wayne State University, Detroit, Michigan, USA. E-mail: phatvt@wayne.edu. Research of this author was partly supported by the US National Science Foundation under grants DMS-1808978 and DMS-2204519.}}
\begin{document}
\maketitle
\vspace*{-0.1in}
\noindent
{\small{\bf Abstract}. The paper is devoted to a systematic study and characterizations of notions of local maximal monotonicity and their strong counterparts for set-valued operators that appear in variational analysis, optimization, and their applications. We obtain novel resolvent characterizations of these notions together with efficient conditions for their preservation under summation in broad infinite-dimensional settings. Further characterizations of these notions are derived by using generalized differentiation of variational analysis in the framework of Hilbert spaces.\\
{\bf Keywords}. variational analysis and optimization, locally monotone operators, generalized differentiation, characterizations of local maximal monotonicity\\
{\bf Mathematics Subject Classification (2020)} 49J52, 49J53, 90C99, 47H05}\vspace*{-0.1in}

\section{Introduction}\label{intro}\vspace*{-0.05in}

It is difficult to overstate the importance of monotonicity and especially {\em $($global$)$ maximal monotonicity} notions and results for set-valued mappings/operators/multifunctions with applications to various areas of operations research, optimization, numerical analysis, systems control, etc. The theory of monotone operators has started in the early 1960s by Minty who obtained in \cite{Minty62} his seminal theorem characterizing maximal monotonicity of set-valued operators in Hilbert spaces via the single-valuedness and surjectivity of resolvents. An extension of Minty's theorem to operators in reflexive Banach spaces and many other fundamental results on global maximal monotonicity were established by Rockafellar (see, e.g., \cite{Rockafellar70,Rockafellar70-Pac}) who applied them, in particular, to developing the {\em proximal point method} (PPM) in numerical optimization. The reader is referred to the books by Bauschke and Combettes \cite{Bauschke2011} and by Simons \cite{Simons08}, which largely reflect the current stage of the theory of monotone operators and their important applications in Hilbert and general Banach spaces, respectively. 

There are two different motivations and formalizations of {\em local maximal monotonicity}, which came from the requirements of variational analysis and optimization. The first notion goes back to Poliquin and Rockafellar \cite{Poli} who used it for the study of {\em tilt stability} of local minimizers, the notion which by now plays a highly important role in both theoretical and numerical aspects of optimization; see \cite{kmp22convex,BorisKhanhPhat,kmptmp,ms21,rtr251,r22} and the references therein for the most recent developments and applications. The second notion of local maximal monotonicity was introduced and further developed by Pennanen \cite{Pen02,Pen03} motivated by applications to local convergence of PPM and related {\em numerical algorithms} of optimization. To the best of our knowledge, both notions of local maximal monotonicity have been studied and applied only in finite dimensions.

In this paper, we address the above notions of local maximal monotonicity for set-valued mappings defined on general {\em Banach spaces} and prove that they are {\em equivalent} if the space in question is {\em reflexive}. Note that all the obtained results are {\em new in finite dimensions}. 

Our major theorem provides an appropriate resolvent characterization of these equivalent notions in reflexive Banach spaces that can be viewed as a {\em local counterpart} of Minty's surjectivity theorem applied to {\em graphical localizations}. We also study and characterize the corresponding {\em strong versions} of local maximal monotonicity. The obtained characterizations allow us to establish verifiable conditions ensuring the {\em preservation} of local maximal monotonicity of set-valued mappings under {\em summation} in reflexive Banach spaces. Our further characterizations of local maximal monotonicity and its strong counterpart are established for set-valued mappings defined on {\em Hilbert spaces}, where we employ powerful tools of {\em variational analysis} and {\em generalized differentiation} that revolve around {\em coderivatives} of multifunctions. Coderivatives have been already used by Mordukhovich and Nghia \cite{MordukhovichNghia1} to characterize {\em strong} local maximal monotonicity and by Chieu et al.\ \cite{CBN} to characterize {\em global} maximal monotonicity of set-valued mappings in Hilbert and finite-dimensional spaces. Now we are able to derive {\em coderivative characterizations} of {\em local maximal monotonicity} (not just its strong version) for arbitrary set-valued operators in Hilbert spaces by using the newly established resolvent characterization of this notion married to the advanced machinery of generalized differentiation. Among strong advantages of the obtained coderivative characterizations are extensive {\em calculus rules} available for coderivatives in the books \cite{Mordukhovich06,Mordukhovich18,Rockafellar98} in both finite and infinite dimensions, which allow us to deal with various structured problems and open the door for subsequent developments and applications. This is done in our forthcoming papers.\vspace*{0.03in}

The rest of the paper is organized as follows. Section~\ref{sec:global-Geometry} recalls and discusses the definitions of {\em globally monotone} and globally {\em maximal monotone} set-valued mappings in arbitrary Banach spaces together with their {\em strong} counterparts. We present here important monotonicity properties needed in what follows, including some new observations and proofs. 

Section~\ref{sec:localmonoproperties} is devoted to the study of two versions of {\em local maximal monotonicity} for set-valued mappings in Banach spaces. The main result here is a {\em resolvent characterization} of both versions of local maximal monotonicity in reflexive Banach spaces, which yields the equivalence of these notions in such a framework. We also prove in this section that local maximal monotonicity is {\em preserved} under the operator {\em summation} in reflexive spaces provided the fulfillment of a certain {\em inner semicontinuity} property, which is shown to be essential for the preservation. 

Section~\ref{sec:strong} concerns {\em local strong monotonicity} of multifunctions in Banach spaces admitting a G\^ateaux smooth renorming. Among other results, we reveal here geometric properties of the space in question ensuring the equivalence between two versions of local strong maximal monotonicity of set-valued operators with a {\em prescribed modulus}. 

Section~\ref{sec:gen-diff} briefly discusses {\em generalized differential tools} of variational analysis in Hilbert spaces that are needed for subsequent characterizations of local maximal and strong maximal monotonicity. Reviewing the major {\em coderivative}  constructions for set-valued mappings employed in such characterizations, we present some of their properties used in what follows.

The main result of Section~\ref{sec:coderi-localmax} provides two equivalent characterizations of local maximal monotonicity of set-valued operators in Hilbert spaces in terms of the {\em positive-semidefiniteness} of appropriate {\em coderivatives} combined with the property of local {\em hypomonotonicity}, which is well understood in variational analysis. The concluding Section~\ref{sec:conc} summarizes the major contributions of the paper and discusses some topics for future research.\vspace*{0.03in}

Finally in this section, we recall some notation used throughout the paper. Unless otherwise stated, all the spaces under consideration are {\em Banach}, and we use the {\em sum norm}
\begin{eqnarray*}
\|(x,y)\|:=\|x\|+\|y\|\;\mbox{ for all }\;x\in X,\;y\in Y
\end{eqnarray*}
on the product spaces $X\times Y$. The symbols $x_k\to x$ and $x_k\st{w}{\to}x$ indicate the strong/norm and weak convergence on $X$. Given a set-valued mapping $F\colon X\tto Y$, denote by
\begin{eqnarray*}
\dom F:=\big\{x\in X\;\big|\;F(x)\ne\emp\big\}\;\mbox{ and }\;\gph F:=\big\{(x,y)\in X\times Y\;\big|\;y\in F(x)\big\}
\end{eqnarray*}
its {\em domain} and {\em graph}, respectively. For a nonempty set $\O\subset X$, we use the notation $x\st{\O}{\to}\ox$ meaning that $x\to\ox$ with $x\in\O$. As always, $\N:=\{1,2,\ldots\}$.
\vspace*{-0.1in}

\section{Global Monotonicity and Maximality in Banach Spaces}\label{sec:global-Geometry}\vspace*{-0.05in}

Let $T\colon X\tto X^*$ be a set-valued mapping acting between a Banach space $X$ and its topological dual $X^*$. Recall that the operator $T$ is (globally) {\em monotone} if 
\begin{eqnarray*}
\la x_1^*- x_2^*, x_1- x_2 \ra \ge 0 \;\text{ whenever }\; (x_1,x_1^*),(x_2,x_2^*)\in \gph T.
\end{eqnarray*}
A monotone operator $T$ is {\em maximal monotone} if $\gph T = \gph S$ for any monotone operator $S:X\rightrightarrows X^*$ with $\gph T \subset \gph S$.

To proceed with strong counterparts of global monotonicity, we first define the {\em duality mapping} $J:X\rightrightarrows X^*$ between $(X,\|\cdot\|)$ and $(X^*,\|\cdot\|_*)$ given by
\begin{equation}\label{duality}
J(x):=\big\{x^* \in X^*\;\big|\;\la x^*,x\ra =\|x\|^2 =\|x^*\|_*^2\big\},\quad x\in X.
\end{equation}
Since $J$ is the subgradient mapping of the convex quadratic function $\frac{1}{2}\|\cdot\|^2$, it is maximal monotone on $X$ with $\dom J=X$. It is well known that the {\em single-valuedness} of the duality mapping $J$ is equivalent to the space $(X,\|\cdot\|)$ being {\em $($G\^ateaux$)$ smooth}, i.e., its norm $\|\cdot\|$ is G\^ateaux differentiability on $X\setminus \{0\}$. The latter class is sufficiently broad including, in particular, all separable and all reflexive Banach spaces; see, e.g., \cite{Ciora,fabian}.

Having \eqref{duality} in hand, we say that $T\colon X\tto X^*$ is (globally) \textit{strongly monotone} on $X$ with modulus $\sigma>0$ (or $\sigma$-strongly monotone) if the shift $T-\sigma J$ is monotone on $X$, i.e., 
\begin{eqnarray*}
\la x_1^* - x_2^*, x_1 - x_2 \ra \ge \sigma \la j(x_1)-j(x_2),x_1-x_2\ra 
\end{eqnarray*}
whenever $(x_1,x_1^*),(x_2,x_2^*)\in \gph T \text{ and } (x_1,j(x_1)),(x_2,j(x_2))\in \gph J$. A $\sigma$-strongly monotone mapping $T$ is \textit{$\sigma$-strongly maximal monotone} if $\gph T = \gph S$ for any $\sigma$-strongly monotone operator $S:X\rightrightarrows X^*$ with $\gph T \subset \gph S$.\vspace*{0.03in}

The following  simple observation is useful in what follows.\vspace*{-0.05in}

\begin{Proposition}\label{prop:STR}
Let $T,S,R:X\rightrightarrows X^*$ be set-valued mappings satisfying the inclusion
\begin{eqnarray}\label{eq:TSR}
\gph (T-S) \subset \gph R. 
\end{eqnarray}
Then we have the equivalent properties:

{\bf(i)} $\gph T \subset \gph (R+S)$.

{\bf (ii)} $\dom T \subset \dom S$.
\end{Proposition}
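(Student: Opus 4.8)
The plan is to make explicit the conventions for pointwise sums and differences of set-valued mappings and then check the two implications by a direct chase of elements. Recall that, for $x\in X$, one has $(T-S)(x)=\{u^*-v^*\mid u^*\in T(x),\ v^*\in S(x)\}$ and $(R+S)(x)=\{r^*+v^*\mid r^*\in R(x),\ v^*\in S(x)\}$; in particular each of these sets is empty whenever one of the operands at $x$ is empty, so that $\dom(T-S)=\dom T\cap\dom S$ and $\dom(R+S)=\dom R\cap\dom S$. Under this convention the hypothesis \eqref{eq:TSR} says precisely that $u^*-v^*\in R(x)$ for every $x\in X$, every $u^*\in T(x)$, and every $v^*\in S(x)$.

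To prove the implication (ii)$\Rightarrow$(i), I would take an arbitrary pair $(x,x^*)\in\gph T$. Then $x\in\dom T\subset\dom S$, so there is some $v^*\in S(x)$, and applying \eqref{eq:TSR} with $u^*:=x^*$ gives $x^*-v^*\in R(x)$. Consequently $x^*=(x^*-v^*)+v^*\in R(x)+S(x)=(R+S)(x)$, hence $(x,x^*)\in\gph(R+S)$; since the pair was arbitrary, $\gph T\subset\gph(R+S)$.

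For the reverse implication (i)$\Rightarrow$(ii), I would fix $x\in\dom T$ and pick some $x^*\in T(x)$. By (i) we have $x^*\in(R+S)(x)=R(x)+S(x)$, and a Minkowski sum of two sets is nonempty only if both of them are; hence $S(x)\ne\emp$, i.e., $x\in\dom S$. As $x\in\dom T$ was arbitrary, this yields $\dom T\subset\dom S$.

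The argument is short and elementary, and I do not expect a genuine obstacle beyond keeping careful track of the empty-set conventions for the pointwise operations. That bookkeeping is precisely the point on which both directions rest: it is what lets us extract the element $v^*\in S(x)$ needed in (ii)$\Rightarrow$(i), and what forces $S(x)$ (indeed also $R(x)$) to be nonempty on $\dom T$ in (i)$\Rightarrow$(ii).
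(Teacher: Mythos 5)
Your proof is correct and follows essentially the same route as the paper: the implication (ii)$\Rightarrow$(i) is verified by exactly the same element chase (extract $v^*\in S(x)$, apply \eqref{eq:TSR} to $x^*-v^*$, and re-add $v^*$), while (i)$\Rightarrow$(ii) rests on the same identity $\dom(R+S)=\dom R\cap\dom S$, which you phrase via the nonemptiness of the Minkowski sum. Your explicit statement of the empty-set conventions for pointwise sums and differences is a harmless and reasonable addition.
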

\begin{proof}
To verify first that (i)$\Longrightarrow$(ii), we deduce from  $\dom (R+S) = \dom R \cap \dom S$ and $\gph T  \subset \gph (R+S)$ that $\dom T \subset \dom R \cap \dom S \subset \dom S$, which gives us (ii). To justify the reverse implication, pick $(x,x^*)\in \gph T$ giving us $x\in \dom T$ and hence $x\in \dom S$. This allows us to find $y^*\in S(x)$ such that $(x,x^*-y^*) \in \gph (T-S)$. It follows from \eqref{eq:TSR} that $(x,x^*-y^*)\in \gph R$, which yields $x^*-y^* \in R(x)$ and $x^*\in R(x)+y^* \subset (R+S)(x)$. Therefore, we get $(x,x^*) \in \gph (R+S)$, which verifies $\gph T \subset \gph (R+S)$ and thus completes the proof.
\end{proof}\vspace*{-0.05in}

The next proposition presents some results about the {\em preservation of maximality} of set-valued mappings under their addition/subtraction with the duality mapping $J$. \vspace*{-0.05in}

\begin{Proposition}\label{prop:preser}
Let $T:X\rightrightarrows X^*$, and let $J:X\rightrightarrows X^*$ be the duality mapping \eqref{duality}. For any fixed $\sigma>0$, we have the following statements:

{\bf(i)} If $T$ is $\sigma$-strongly maximal monotone, then $T-\sigma J$ is maximal monotone on $X$.

{\bf(ii)} If $X$ is reflexive and $T$ is maximal monotone, then the shifted operator $T+\sigma J$ is also maximal monotone. 

{\bf(iii)} If $T$ is maximal monotone with the
closed and convex domain $\dom T$, then $T+\sigma J$ is maximal monotone.

{\bf(iv)} If in addition to the assumptions in either {\rm(ii)} or {\rm(iii)}, the norm $\|\cdot\|$ is G\^ateaux differentiable on $X\setminus\{0\}$, then $T+\sigma J$ is $\sigma$-strongly maximal monotone.
\end{Proposition}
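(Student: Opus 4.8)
The plan is to dispatch the four items in order, using Proposition~\ref{prop:STR} for the ``soft'' parts (i) and (iv), and invoking classical sum theorems for maximal monotone operators for (ii) and (iii). Two elementary facts are recorded at the outset and used repeatedly: the operator $\sigma J$ is the subgradient mapping of the convex continuous function $\frac{\sigma}{2}\|\cdot\|^2$, hence it is maximal monotone with $\dom(\sigma J)=X$; and every $\sigma$-strongly monotone operator is monotone, since $\la j(x_1)-j(x_2),x_1-x_2\ra\ge 0$ by monotonicity of $J$. We also use that a maximal monotone operator has nonempty graph, hence nonempty domain (the empty operator is properly contained in any singleton-valued monotone operator).

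\textbf{Part (i).} The shift $T-\sigma J$ is monotone directly by the definition of $\sigma$-strong monotonicity of $T$. For maximality, take any monotone $S\colon X\tto X^*$ with $\gph(T-\sigma J)\subset\gph S$ and apply Proposition~\ref{prop:STR} with $\sigma J$ in the role of $S$ there and $S$ in the role of $R$; the hypothesis $\dom T\subset\dom(\sigma J)=X$ is automatic, so we obtain $\gph T\subset\gph(S+\sigma J)$. Since $(S+\sigma J)-\sigma J$ reduces to $S$ — transparently when $J$ is single-valued, and otherwise by arguing at the level of selections of $J$ — the operator $S+\sigma J$ is $\sigma$-strongly monotone, whence $\sigma$-strong maximality of $T$ forces $\gph T=\gph(S+\sigma J)$; subtracting $\sigma J$ back yields $\gph(T-\sigma J)=\gph S$.

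\textbf{Parts (ii) and (iii).} Both are maximality-of-the-sum statements for $T$ and the maximal monotone operator $\sigma J$. In (ii), reflexivity of $X$ together with $\dom(\sigma J)=X$ makes the constraint qualification $\dom T\cap\inte\dom(\sigma J)=\dom T\ne\emp$ automatic, so the classical Rockafellar sum theorem in reflexive spaces gives that $T+\sigma J$ is maximal monotone. In (iii) reflexivity is dropped but $\dom T$ is closed and convex; here I invoke the sum theorem available in arbitrary Banach spaces under this structural hypothesis — a maximal monotone operator with (closed) convex domain is of type (FPV), and the sum of a type-(FPV) operator with a maximal monotone operator of full domain is maximal monotone — to conclude again that $T+\sigma J$ is maximal monotone.

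\textbf{Part (iv) and the main obstacle.} Under the hypotheses of (ii) or (iii), $T+\sigma J$ is maximal monotone by the above. G\^ateaux differentiability of $\|\cdot\|$ on $X\setminus\{0\}$ makes $J$ single-valued, so $(T+\sigma J)-\sigma J=T$ is monotone, i.e., $T+\sigma J$ is $\sigma$-strongly monotone; and an operator that is both maximal monotone and $\sigma$-strongly monotone is automatically $\sigma$-strongly maximal monotone, because any $\sigma$-strongly monotone extension is in particular a monotone extension and hence coincides with it. The only genuinely non-routine point is part~(iii): in a non-reflexive space the maximality of a sum of maximal monotone operators is delicate (the long-standing ``sum problem''), so the argument must pin down the precise known result that applies when one summand has closed convex domain and the other is the subdifferential of a continuous convex function with full domain. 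Parts (i) and (iv) are formal consequences of Proposition~\ref{prop:STR} and single-valuedness of $J$, and part (ii) is the standard reflexive sum theorem.
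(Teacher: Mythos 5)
Your proof follows the paper's route almost verbatim. Parts (i) and (iv) are handled exactly as in the paper: Proposition~\ref{prop:STR} together with $\dom(\sigma J)=X$ gives $\gph T\subset\gph(S+\sigma J)$, and maximality within the class of all monotone operators trivially implies maximality within the smaller class of $\sigma$-strongly monotone ones (you are in fact slightly more careful than the paper in noting that $S+\sigma J$ must be checked to be $\sigma$-strongly monotone before invoking the $\sigma$-strong maximality of $T$). Part (ii) is the same appeal to Rockafellar's sum theorem in reflexive spaces with the automatic qualification condition $\dom T\cap\inte(\dom\sigma J)=\dom T\ne\emp$. The only divergence is in (iii), and it is worth fixing: the paper cites a precise non-reflexive sum theorem of Voisei (Theorem~5.10($\eta$) of \cite{Voisei08}), whose hypotheses — $\dom T\cap\dom(\sigma J)$ closed, $\cl(\dom T)$ convex, $\dom T\cap\inte(\dom\sigma J)\ne\emp$ — are verified directly. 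Your route instead passes through the claim that a maximal monotone operator with closed convex domain is of type (FPV); this is not a cleanly established theorem (whether every maximal monotone operator is of type (FPV) is a well-known open question, and closed convex domain is not among the standard sufficient conditions), so as stated this link is a gap. You rightly flag that the precise external result must be pinned down; the repair is simply to invoke Voisei's theorem as the paper does.
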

\begin{proof} To verify (i), assume that $\gph (T-\sigma J) \subset \gph S$ for some monotone mapping $S:X\rightrightarrows X^*$. As $\gph (T-\sigma J) \subset \gph S$ and $\dom (\sigma J)= X$, Proposition \ref{prop:STR} implies that $\gph T \subset \gph (S+\sigma J)$. The maximality of $T$ yields the equality $\gph T = \gph (S+\sigma J)$. It follows from Proposition~\ref{prop:STR} that $\gph S \subset \gph (T-\sigma J)$, and hence the equality $\gph S = \gph (T-\sigma J)$ holds. Therefore, $T-\sigma J$ is maximal monotone.  

Next we justify assertion (ii). Since $T$ is maximal monotone, we get $\dom T \neq \emptyset$. Moreover, $\sigma J$ is maximal monotone with $\inte(\dom \sigma J) = X$. Thus $\dom T \cap \inte (\dom \sigma J)\neq \emptyset$, and it follows from \cite[Theorem~1]{Rockafellar70} that $T+\sigma J$ is maximal monotone under the reflexivity of $X$. 

For (iii), we have that the set $\dom T \cap \dom (\sigma J) = \dom T$ is closed, that the set $\mathrm{cl}(\dom T) = \dom T$ is convex, and that $\dom T \cap \mathrm{int}(\dom \sigma J) = \dom T \neq \emptyset$. All of the above ensure the maximal monotonicity of $T+\sigma J$ by using the result of \cite[Theorem~5.10($\eta$)]{Voisei08}.

If for (iv) the norm $\|\cdot\|$ is G\^ateaux smooth on $X\setminus\{0\}$ in (ii) and (iii), then $J$ is single-valued on the whole space $X$, which ensures that $T+\sigma J$ is $\sigma$-strongly monotone by the monotonicity of $T=(T+\sigma J)-\sigma J$. Combining  this with the maximality justified above, we arrive at the $\sigma$-strong maximal monotonicity of $T+\sigma J$ and thus complete the proof.
\end{proof}\vspace*{-0.05in}

In fact, the assumptions of Proposition~\ref{prop:preser} allow us to get the following necessary and sufficient condition for the $\sigma$-strong monotonicity of set-valued mappings.\vspace*{-0.05in}

\begin{Proposition}\label{strong-alter-glo}
Let $(X,\|\cdot\|)$ be a G\^ateaux smooth Banach space, and let $T:X\tto X^*$ be a $\sigma$-strongly monotone operator for some $\sigma >0$. Assume that either $X$ is reflexive, or the set $\dom T$ is closed and convex. Then $T$ is $\sigma$-strongly maximal monotone if and only if 
\begin{equation}\label{strong-alter}
\gph T = \gph S \text{ for any $\sigma$-strongly monotone }S:X\tto X^* \text{ with }\gph T \subset \gph S.
\end{equation}
\end{Proposition}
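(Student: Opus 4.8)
The plan is to prove the two implications separately, with the ``only if'' direction being essentially free and all the substance sitting in the ``if'' direction. For ``only if'', I would note that a $\sigma$-strongly maximal monotone $T$ is in particular maximal monotone: every $\sigma$-strongly monotone $S:X\tto X^*$ is monotone, being the sum $(S-\sigma J)+\sigma J$ of the monotone operators $S-\sigma J$ and $\sigma J$; hence any $\sigma$-strongly monotone extension $S\supseteq T$ is a monotone extension of $T$ and must obey $\gph S=\gph T$, which is exactly \eqref{strong-alter}.

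For the reverse implication I would pass to the shifted operator $T-\sigma J$. Since $(X,\|\cdot\|)$ is G\^ateaux smooth, $J$ is single-valued with $\dom J=X$, so $R\mapsto R+\sigma J$ is a bijection on the set of operators $X\tto X^*$ with inverse $R\mapsto R-\sigma J$: it merely translates each fiber $R(x)$ by the single vector $\sigma J(x)$, hence preserves and reflects graph inclusions and carries the monotone operators bijectively onto the $\sigma$-strongly monotone ones. A direct computation (or Proposition~\ref{prop:STR}, using $\dom(\sigma J)=X$) shows that $\gph T\subset\gph(R+\sigma J)$ is equivalent to $\gph(T-\sigma J)\subset\gph R$. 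Consequently, \eqref{strong-alter} says precisely that the monotone operator $T-\sigma J$ admits no proper monotone extension, i.e., that $T-\sigma J$ is maximal monotone.

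To finish, I would push this maximality back through the shift by invoking Proposition~\ref{prop:preser}. Since $\dom(T-\sigma J)=\dom T$, the standing hypothesis (reflexivity of $X$, or closedness and convexity of $\dom T$) applies verbatim to $T-\sigma J$; combined with the assumed G\^ateaux smoothness of the norm, Proposition~\ref{prop:preser}(iv) applied to the maximal monotone operator $T-\sigma J$ (splitting into the reflexive case via the hypotheses of part (ii) and the closed-convex-domain case via those of part (iii)) yields that $(T-\sigma J)+\sigma J=T$ is $\sigma$-strongly maximal monotone, as required. The one genuinely delicate point is recognizing the $\pm\sigma J$ shift bijection and confirming that it matches condition \eqref{strong-alter} with the maximal monotonicity of $T-\sigma J$; once that reduction is in place, Proposition~\ref{prop:preser} supplies all the remaining content, which is precisely why this result follows from the assumptions of that proposition.
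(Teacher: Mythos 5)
Your ``if'' direction is essentially the paper's own proof: you show that \eqref{strong-alter} forces the shifted operator $T-\sigma J$ to be maximal monotone (the paper does this via Proposition~\ref{prop:STR} together with $\dom(\sigma J)=X$, exactly as in your parenthetical), and you then recover the $\sigma$-strong maximal monotonicity of $T=(T-\sigma J)+\sigma J$ from Proposition~\ref{prop:preser}, correctly noting that $\dom(T-\sigma J)=\dom T$ so that the reflexivity/closed-convex-domain alternative transfers to the shifted operator. One quibble with your ``only if'' direction: the assertion that a $\sigma$-strongly maximal monotone operator is maximal monotone is not justified by the observation that every $\sigma$-strongly monotone map is monotone --- that observation only says the class of monotone extensions is \emph{larger}, so maximality within it is the \emph{stronger} property, and the implication you want is really the nontrivial content of Proposition~\ref{prop:preser}(i) combined with a sum theorem under the standing hypotheses. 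Fortunately the detour is unnecessary: with the paper's definition of $\sigma$-strong maximal monotonicity, condition \eqref{strong-alter} is that definition verbatim, so the ``only if'' direction is immediate, which is all the paper itself records.
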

\begin{proof}
It is clear that the $\sigma$-strong maximal monotonicity of $T$ yields \eqref{strong-alter}. Conversely, suppose that $T$ satisfies \eqref{strong-alter} and show that $T$ is $\sigma$-strongly maximal monotone. First check that the $\sigma$-strong monotonicity of $T$ and \eqref{strong-alter} imply that $T-\sigma J$ is maximal monotone. Indeed, taking into account that  $T-\sigma J$ is monotone on $X$ and considering any monotone mapping $S:X\tto X^*$ with $\gph (T-\sigma J)\subset \gph S$ tell us by Proposition~\ref{prop:STR} that $\gph T \subset \gph (S+\sigma J)$. Since the operator $S+\sigma J$ is $\sigma$-strongly monotone on $X$, we deduce from \eqref{strong-alter} that $\gph T = \gph (S+\sigma J)$, and hence $\gph (T-\sigma J)= \gph S$. This verifies the maximal monotonicity of $T-\sigma J$. To complete the proof of the proposition, observe that the assumptions on either the reflexivity of $X$ or on the closedness and convexity of $\dom T$ together with the G\^ateaux smoothness of $\|\cdot\|$ ensure by Proposition~\ref{prop:preser} that the operator $T=(T-\sigma J)+\sigma J$ is $\sigma$-strongly maximal monotone.
\end{proof}\vspace*{-0.05in}

We conclude this section with the following proposition, which shows that the study of monotone (resp.\ strongly monotone) mappings can be largely reduced to that of their maximal (resp.\ strongly maximal monotone) versions.\vspace*{-0.05in} 

\begin{Proposition}\label{lem:extend}
Let $T:X\rightrightarrows X^*$, where $(X,\|\cdot\|)$ is an arbitrary Banach space. We have:

{\bf(i)} If $T$ is monotone, then there exists a maximal monotone operator $\bar{T}:X\rightrightarrows X^*$ extending $T$ in the sense that $\gph T \subset \gph \bar{T}$.

{\bf(ii)} If $X$ is reflexive and G\^ateaux smooth, and if $T$ is $\sigma$-strongly monotone with some $\sigma>0$, then there exists a $\sigma$-strongly maximal monotone operator $\bar{T}:X\rightrightarrows X^*$ extending $T$.
\end{Proposition}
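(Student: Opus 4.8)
The plan is to handle the two parts separately, reducing part (ii) to part (i) by means of the duality shift $T\mapsto T-\sigma J$ and back.

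For part (i), the natural route is a standard Zorn's lemma argument. I would consider the family $\mathcal{F}$ of all monotone operators $S\colon X\tto X^*$ with $\gph T\subset\gph S$, partially ordered by graph inclusion. This family is nonempty since $T\in\mathcal{F}$. Given a chain $\{S_\alpha\}\subset\mathcal{F}$, I would define $S$ by $\gph S:=\bigcup_\alpha\gph S_\alpha$ and observe that $S$ is again monotone: any two points of $\gph S$ lie in a common $S_\alpha$ by the chain property, so the monotonicity inequality is inherited; clearly $S\in\mathcal{F}$ and $S$ dominates every $S_\alpha$, so it is an upper bound for the chain. Zorn's lemma then yields a maximal element $\bar T\in\mathcal{F}$. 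By construction $\gph T\subset\gph\bar T$, and maximality of $\bar T$ inside $\mathcal{F}$ is exactly the statement that $\bar T$ is maximal monotone on $X$, since any monotone operator whose graph contains $\gph\bar T$ also contains $\gph T$ and hence lies in $\mathcal{F}$.

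For part (ii), since $T$ is $\sigma$-strongly monotone and $X$ is G\^ateaux smooth (so that $J$ is single-valued on $X$), the operator $T-\sigma J$ is monotone on $X$. By part (i) there is a maximal monotone $\bar S\colon X\tto X^*$ with $\gph(T-\sigma J)\subset\gph\bar S$, and I would set $\bar T:=\bar S+\sigma J$. Since $\dom T\subset X=\dom(\sigma J)$, Proposition~\ref{prop:STR} applied with $R=\bar S$ and $S=\sigma J$ gives $\gph T\subset\gph(\bar S+\sigma J)=\gph\bar T$, so $\bar T$ extends $T$. Moreover $\bar T-\sigma J=\bar S$ is monotone, hence $\bar T$ is $\sigma$-strongly monotone; and because $X$ is reflexive with $\bar S$ maximal monotone, Proposition~\ref{prop:preser}(ii) makes $\bar S+\sigma J$ maximal monotone, whereupon Proposition~\ref{prop:preser}(iv)---using the G\^ateaux differentiability of $\|\cdot\|$ on $X\setminus\{0\}$---upgrades this to $\sigma$-strong maximal monotonicity of $\bar T=\bar S+\sigma J$.

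I do not expect a genuine obstacle here: part (i) is the classical maximal-extension argument, and part (ii) is essentially bookkeeping that glues it to Propositions~\ref{prop:STR} and~\ref{prop:preser}. The only points deserving a line of care are that the shifted-back operator $\bar S+\sigma J$ still dominates $T$ graphically---which is precisely what the domain hypothesis $\dom T\subset\dom(\sigma J)$ in Proposition~\ref{prop:STR} supplies---and that the identity $\bar T-\sigma J=\bar S$, together with the equivalence ``$\bar S$ monotone $\iff$ $\bar T$ $\sigma$-strongly monotone,'' is legitimate only because G\^ateaux smoothness forces $J$ to be single-valued.
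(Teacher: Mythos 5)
Your proposal is correct and follows essentially the same route as the paper: part (i) is the classical Zorn's lemma extension argument (which the paper simply cites from Rockafellar--Wets), and part (ii) is the same reduction via the shift $T-\sigma J$, Proposition~\ref{prop:STR}, and Proposition~\ref{prop:preser}. Your explicit invocation of Proposition~\ref{prop:preser}(iv) to upgrade maximality to $\sigma$-strong maximality is in fact slightly more careful than the paper's citation of part (ii) alone.
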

\begin{proof} Assertion (i) is proved by employing Zorn's lemma; see, e.g., \cite[Proposition~12.6]{Rockafellar98}. Let us now derive (ii) from (i) by the usage of the reflexivity of $X$ and the G\^ateaux smoothness of the norm $\|\cdot\|$. If $T$ is $\sigma$-strongly monotone with some $\sigma>0$, then the shift $T-\sigma J$ is monotone, and thus we get by (i) a maximal monotone mapping $\bar{T}:X\rightrightarrows X^*$ with $\gph (T-\sigma J) \subset \gph \bar{T}$. From Proposition~\ref{prop:STR} and $\dom (\sigma J) = \dom J = X$, it follows that $\gph T \subset \gph (\bar{T}+\sigma J)$. Since the extension $\bar{T}$ is maximal monotone and since the space $(X,\|\cdot\|)$ is reflexive and G\^ateaux smooth, Proposition~\ref{prop:preser}(ii) ensures that $\bar{T}+\sigma J$ is $\sigma$-strongly maximal monotone. This shows that the operator $\bar T$ satisfies all the requirements in (ii), which thus completes the proof.
\end{proof}\vspace*{-0.2in}

\section{Local Monotonicity of Set-Valued Mappings}\label{sec:localmonoproperties}\vspace*{-0.05in}

This section addresses {\em local} monotonicity of set-valued mappings defined  on Banach spaces and study the two versions of {\em local maximal} monotonicity discussed in Section~\ref{intro}. The proof of the main result here uses some notions and facts from {\em geometry theory} of Banach spaces, which we overview first based on the books \cite{Ciora,fabian,nam}, where the reader can find more details. 

Similarly to the case of G\^ateaux smoothness, the norm $\|\cdot\|$ on $X$ is {\em Fr\'echet smooth} if it is Fr\'echet differentiable on $X\setminus \{0\}$. This is equivalent to saying that the duality mapping \eqref{duality} is single-valued and continuous on $X$. Note that in this case, the function $\frac{1}{2}\|\cdot\|^2$ is continuously differentiable on $X$ with $\nabla \left( \frac{1}{2}\|\cdot\|^2\right)=J$. It is well known that if either $X$ is reflexive or $X^*$ is separable, then $(X,\|\cdot\|)$ admits a Fr\'echet smooth {\em renorming}, i.e., an equivalent norm that is Fr\'echet differentiable at all nonzero points. 

Further, a Banach space $(X,\|\cdot\|)$ is {\em strictly convex} if for all $x,y\in S_X:=\{x\in X\mid \|x\|=1\}$, $x\neq y$, we have
$\|\lambda x + (1-\lambda)y\|<1$ for all $\lambda\in (0,1)$. Recall finally that $(X,\|\cdot\|)$ is \textit{Kadec-Klee} if whenever $x_k\xrightarrow{w}x$ and $\|x_k\|\rightarrow \|x\|$,
the strong convergence $x_k \rightarrow x$ as $k\to\infty$ is guaranteed. The fundamental achievements in geometric theory of Banach spaces tell us that in {\em any reflexive} space $X$, there exists an equivalent norm $\|\cdot\|$ such that $(X,\|\cdot\|)$ is
\begin{equation}\label{Troyanski-Asplund}
\text{Fr\'echet smooth, strictly convex, and Kadec-Klee.}
\end{equation}

Now we are ready to proceed with the study of locally monotone multifunctions with paying the main attention to the maximality issues. The notions of local monotonicity is defined in the following natural way as a localization of global monotonicity. A set-valued mapping $T\colon X\tto X^*$ on a Banach space $X$ is {\em locally monotone} around a point $(\ox,\ox^*)\in \gph T$ if there is an (open) neighborhood $U\times V$ of $(\ox,\ox^*)$ and a globally monotone operator $\bar{T}:X\rightrightarrows X^*$ such that 
\begin{eqnarray}\label{eq:local-mono}
\gph \bar{T} \cap (U\times V) = \gph T \cap (U\times V).
\end{eqnarray}
The mapping $T$ is called {\em monotone with respect to} $W\subset X\times X^*$ ($W$ is not necessarily open) if \eqref{eq:local-mono} holds with replacing $U\times V$ by $W$.\vspace*{0.03in}

For local {\em maximal} monotonicity, we have the following two versions labeled as types $(A)$ and $(B)$, which were originated in \cite{Poli} and \cite{Pen02}, respectively.\vspace*{-0.03in}

\begin{Definition}\label{defi:local-max-R} 
Let $T:X\rightrightarrows X^*$, and let $(\ox,\ox^*)\in\gph T$, where $X$ is a Banach space. Then:

{\bf(i)} $T$ is {\sc locally maximal monotone of type (A)} around $(\ox,\ox^*)$ if there is a neighborhood $U\times V$ of $(\ox,\ox^*)$ such that $T$ is monotone with respect to $U\times V$ and that $\gph T\cap (U\times V)=\gph S\cap (U\times V)$ for any mapping $S:X\rightrightarrows X^*$, which is monotone with respect to $U\times V$ and satisfies the inclusion $\gph T\cap (U\times V)\subset \gph S \cap (U\times V)$.
    
{\bf(ii)} $T$ is {\sc locally maximal monotone of type (B)} around $(\ox,\ox^*)$ if there are a neighborhood $U\times V$ of $(\ox,\ox^*)$ and a maximal monotone mapping $\bar{T}$ such that \eqref{eq:local-mono} holds.
\end{Definition}\vspace*{-0.05in}

Similarly to the above, we understand the monotonicity of types (A) and (B) {\em with respect to some $W\subset X\times X^*$}. The following proposition easily follows from the definitions.\vspace*{-0.03in} 

\begin{Proposition}\label{propo:R->P}
If $T:X\rightrightarrows X^*$ is locally maximal monotone of type $(A)$ with respect to $W\subset X\times X^*$, then this mapping is locally maximal monotone of type $(B)$ with respect to $W$.
\end{Proposition}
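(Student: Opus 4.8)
The plan is to manufacture, out of the type~$(A)$ data, a genuinely (globally) maximal monotone operator that coincides with $T$ on $W$; this is exactly the object required by Definition~\ref{defi:local-max-R}(ii). First I would unpack the hypothesis: since $T$ is locally maximal monotone of type~$(A)$ with respect to $W$, it is in particular monotone with respect to $W$, so by \eqref{eq:local-mono} (with $U\times V$ replaced by $W$) there is a globally monotone operator $T_0\colon X\tto X^*$ with $\gph T_0\cap W=\gph T\cap W$.

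Next I would pass to maximality. By Proposition~\ref{lem:extend}(i) there exists a maximal monotone operator $\bar T\colon X\tto X^*$ with $\gph T_0\subset\gph\bar T$, and intersecting with $W$ gives
\begin{eqnarray*}
\gph T\cap W=\gph T_0\cap W\subset\gph\bar T\cap W.
\end{eqnarray*}
Now I would invoke the maximality built into type~$(A)$. The operator $\bar T$ is globally monotone, hence it is monotone with respect to $W$ (it serves as its own witness in the localization identity \eqref{eq:local-mono}), and it satisfies the inclusion $\gph T\cap W\subset\gph\bar T\cap W$. Applying the defining property of type~$(A)$ with $S:=\bar T$ then forces the equality $\gph T\cap W=\gph\bar T\cap W$. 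Since $\bar T$ is maximal monotone, this is precisely the type~$(B)$ condition with respect to $W$, so the proof is finished.

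As for difficulty, there is essentially no obstacle here: the argument is a short piece of bookkeeping with graph intersections, resting entirely on the Zorn-type extension recorded in Proposition~\ref{lem:extend}(i). The only point that deserves an explicit word is the observation that a globally (maximal) monotone operator automatically qualifies as ``monotone with respect to $W$'', which is immediate because such an operator may be taken as its own witness in \eqref{eq:local-mono}.
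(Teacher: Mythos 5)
Your proof is correct and follows essentially the same route as the paper: both extract a globally monotone witness of the local monotonicity, extend it (or its restriction to $W$) to a maximal monotone operator via Proposition~\ref{lem:extend}(i), and then use the type~$(A)$ maximality with $S:=\bar T$ to force $\gph T\cap W=\gph\bar T\cap W$. The remark that a globally monotone operator is automatically monotone with respect to $W$ is the same observation the paper relies on implicitly.
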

\begin{proof}
Assuming that $T$ is locally maximal monotone of type $(A)$ with respect to $W$, we find a globally maximal monotone mapping $\bar T\colon X\tto X^*$ with $\gph \bar{T} \cap W = \gph T \cap W$. Let $F:X\rightrightarrows X^*$ be such that $\gph F := \gph T \cap W$, which yields the global monotonicity of $F$. It follows from Proposition~\ref{lem:extend}(i) that there exists a maximal monotone operator $\bar{T}:X\rightrightarrows X^*$ satisfying $\gph F \subset \gph \bar{T}$. Therefore, we arrive at the inclusion
\begin{equation}\label{216}
\gph T \cap W \subset \gph \bar{T}. 
\end{equation}
The monotonicity of $\bar{T}$ and local monotonicity (A) of $T$ with respect to $W$ imply by \eqref{216} that $\gph T \cap W = \gph \bar{T} \cap W$. This gives us a maximal monotone operator satisfying $\gph T \cap W = \gph \bar{T} \cap W$. Thus $T$ is locally maximal monotone of type $(B)$ with respect to $W$.
\end{proof}\vspace*{-0.05in}

A natural question arises on whether we have the {\em equivalence} between the two types of local maximal monotonicity in Definition~\ref{defi:local-max-R}. In what follows, we answer this question in the {\em affirmative} in the case of {\em reflexive} Banach spaces. In fact, we deduce this equivalence from the {\em resolvent characterization} of both notions in our major theorem given below.\vspace*{0.03in}

To proceed, recall that given a set-valued mapping $T:X\rightrightarrows Y$ and a point $(\ox,\oy)\in\gph T$, it is said that $\widehat{T}:X\rightrightarrows Y$ is a {\em $($graphical$)$ localization} of $T$ around $(\ox,\oy)$ if there is a neighborhood $U\times V$ of this point such that $\gph \widehat{T}= \gph T\cap (U\times V)$. Furthermore, $T$ has a {\em single-valued localization} around $(\ox,\oy)$ if the mapping $\widehat{T}$ is single-valued on $U$ with $\dom \widehat{T} = U$. If in addition $\widehat{T}$ is continuous on $U$ then we say that $T$ admits a {\em continuous single-valued localization} around $(\ox,\oy)$. It is obvious that if $T$ has a (continuous) single-valued localization around $(\ox,\oy)$ and $\sigma \neq 0$, then the mapping $\sigma T$ has a (continuous) single-valued localization around $(\ox,\sigma\oy)$, and also the inverse mapping $T^{-1}$ has a (continuous) single-valued localization around $(\oy,\ox)$.\vspace*{0.03in}

The following crucial result provides a characterization of both notions of local maximal monotonicity of set-valued operators $T\colon X\tto X^*$ in terms of continuous single-valued localizations of their {\em resolvents} $( J+\lambda T)^{-1}$, where $J$ is the duality mapping \eqref{duality} associated with the $\|\cdot\|$ which satisfies the properties in \eqref{Troyanski-Asplund}.
Recall that such an equivalent norm is available in any reflexive Banach space. The obtained characterization can be viewed as a {\em local counterpart} of the celebrated Minty theorem on global monotonicity.\vspace*{-0.05in}

\begin{Theorem}\label{theo:loc-Minty}
Let $X$ be a reflexive space endowed with an equivalent norm $\|\cdot\|$ satisfying the properties in \eqref{Troyanski-Asplund}, and let $J$ be the associated duality mapping \eqref{duality}. Given a multifunction $T:X\rightrightarrows X^*$ and a point $(\ox,\ox^*)\in \gph T$, the following assertions are equivalent:

{\bf(i)} $T$ is locally maximal monotone of type $(B)$ around $(\ox,\ox^*)$.

{\bf(ii)} $T$ is locally monotone around $(\ox,\ox^*)$ and the resolvent $(J+\lambda T)^{-1}$ has a continuous single-valued localization around $( J(\ox)+\lambda \ox^*,\ox)$ for any $\lm>0$.

{\bf(iii)} $T$ is locally maximal monotone of type $(A)$ around $(\ox,\ox^*)$.
\end{Theorem}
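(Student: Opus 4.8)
The strategy is to prove the cycle (iii)$\Rightarrow$(i)$\Rightarrow$(ii)$\Rightarrow$(iii), exploiting Proposition \ref{propo:R->P} for the easy first implication and a localized Minty-type argument for the rest. The implication (iii)$\Rightarrow$(i) is immediate: local maximal monotonicity of type $(A)$ around $(\ox,\ox^*)$ yields type $(B)$ around the same point by applying Proposition \ref{propo:R->P} with $W = U\times V$ an open neighborhood. So the substance lies in (i)$\Rightarrow$(ii) and (ii)$\Rightarrow$(iii).

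For (i)$\Rightarrow$(ii), I would start from a maximal monotone $\bar T$ and a neighborhood $U\times V$ with $\gph\bar T\cap(U\times V)=\gph T\cap(U\times V)$; this already gives local monotonicity of $T$. Since $X$ is reflexive with the renorming \eqref{Troyanski-Asplund}, Proposition \ref{prop:preser}(ii) shows $J+\lambda\bar T$ is maximal monotone for every $\lambda>0$, so by Minty's theorem (in its reflexive Banach-space form via the duality mapping $J$) the global resolvent $R_\lambda:=(J+\lambda\bar T)^{-1}$ is single-valued with full domain $X^*$. The key is then to show $R_\lambda$ is \emph{continuous} at the reference point and that it provides a graphical localization of $(J+\lambda T)^{-1}$. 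Continuity of $R_\lambda$ is where the geometric properties \eqref{Troyanski-Asplund} enter decisively: given $x_k^*\to x^*$, monotonicity of $\bar T$ yields boundedness of $R_\lambda(x_k^*)$, reflexivity extracts a weak subsequential limit, maximal monotonicity identifies the limit (demiclosedness of the graph), and the Kadec--Klee property upgrades weak to strong convergence once the norms converge — which follows from Fr\'echet smoothness of $J$ and strict convexity. Then, because $R_\lambda$ is continuous and $R_\lambda(J(\ox)+\lambda\ox^*)=\ox$, a small ball around $J(\ox)+\lambda\ox^*$ is mapped into $U$; intersecting with the condition that the image lands in $V$-compatible data, one checks $\gph\big((J+\lambda T)^{-1}\big)$ and $\gph R_\lambda$ coincide near $(J(\ox)+\lambda\ox^*,\ox)$, so the restriction of $R_\lambda$ is the desired continuous single-valued localization.

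For (ii)$\Rightarrow$(iii), the plan is to manufacture a maximal monotone extension from the localized resolvent and then verify the type-$(A)$ maximality condition directly. Fix $\lambda>0$ and let $r$ denote the continuous single-valued localization of $(J+\lambda T)^{-1}$ on a neighborhood $U'$ of $J(\ox)+\lambda\ox^*$. Using that $(J+\lambda T)^{-1}$ has this localization, I would define $\bar S:=\frac1\lambda\big((J\circ r)^{-1}\circ r^{-1}-J\big)$ informally — more carefully, take $\gph\bar T_0:=\{(r(w),\frac1\lambda(w-J(r(w))))\mid w\in U'\}$, a localization of $\gph T$, extend the monotone operator supported on it to a maximal monotone $\bar T$ via Proposition \ref{lem:extend}(i), and argue that near $(\ox,\ox^*)$ this extension cannot pick up extra graph points: any $(x,x^*)\in\gph\bar T$ near $(\ox,\ox^*)$ gives $J(x)+\lambda x^*$ near $J(\ox)+\lambda\ox^*$, hence in $U'$, and since $J+\lambda\bar T$ is monotone with $(J+\lambda T)^{-1}=r$ there, single-valuedness forces $x=r(J(x)+\lambda x^*)$, i.e.\ $(x,x^*)\in\gph T$. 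This yields a maximal monotone $\bar T$ agreeing with $T$ on a neighborhood, giving type $(B)$; to get type $(A)$, one observes that any $S$ monotone with respect to $U\times V$ with $\gph T\cap(U\times V)\subset\gph S\cap(U\times V)$ can itself be extended to a global monotone, hence maximal monotone operator dominating $\bar T$ locally, and maximality of $\bar T$ collapses the inclusion to equality on a possibly smaller neighborhood.

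The main obstacle I anticipate is the continuity argument for the resolvent in (i)$\Rightarrow$(ii): in a general reflexive Banach space the resolvent $(J+\lambda\bar T)^{-1}$ is single-valued (Minty) but \emph{not} automatically norm-continuous, and the standard Hilbert-space nonexpansiveness is unavailable. This is precisely why the renorming \eqref{Troyanski-Asplund} is imposed — Fr\'echet smoothness of the norm (continuity of $J$), strict convexity, and the Kadec--Klee property together force the weak-to-strong upgrade. Getting this chain of implications right, and making sure the neighborhoods are shrunk consistently so that the localizations genuinely match on a common $U\times V$, is the delicate part; the rest is bookkeeping with Propositions \ref{prop:STR}, \ref{prop:preser}, and \ref{lem:extend}.
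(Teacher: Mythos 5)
Your cycle (iii)$\Rightarrow$(i)$\Rightarrow$(ii)$\Rightarrow$(iii) matches the paper's, and your (i)$\Rightarrow$(ii) is essentially the paper's argument: Rockafellar's theorem gives a single-valued demicontinuous resolvent $(J+\lambda\bar T)^{-1}$, and the weak-to-strong upgrade comes from splitting $\la x_k^*-x^*,x_k-x\ra\to 0$ into the two nonnegative terms $\la J(x_k)-J(x),x_k-x\ra$ and $\lambda\la y_k^*-y^*,x_k-x\ra$, using $\la J(x_k)-J(x),x_k-x\ra\ge(\|x_k\|-\|x\|)^2$ to get $\|x_k\|\to\|x\|$, and then Kadec--Klee. (Your attribution of the norm convergence to ``Fr\'echet smoothness and strict convexity'' is slightly off --- it is this monotonicity splitting that does the work --- but that is a presentational issue in a plan.)

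The genuine gap is in (ii)$\Rightarrow$(iii). Your construction of $\bar T$ from the localized resolvent correctly yields local maximal monotonicity of type $(B)$, i.e.\ it proves (ii)$\Rightarrow$(i). But the final step --- ``any locally monotone $S$ containing $\gph T\cap(U\times V)$ extends to a maximal monotone $\hat S$ dominating $\bar T$ locally, and maximality of $\bar T$ collapses the inclusion to equality'' --- does not follow. Global maximality of $\bar T$ only rules out monotone sets containing \emph{all} of $\gph\bar T$; here $\hat S$ is only known to contain $\gph\bar T\cap(U\times V)$, and nothing forces $\gph\hat S\supset\gph\bar T$. The assertion that a maximal monotone operator admits no proper local monotone enlargement is exactly the implication (B)$\Rightarrow$(A) that the theorem is supposed to establish, so the argument is circular; as written, your cycle proves (i)$\Leftrightarrow$(ii) and (iii)$\Rightarrow$(i) but never returns to (iii). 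The paper closes this gap by verifying type $(A)$ directly from the resolvent localization: given $(x,x^*)\in\Psi_\lambda^{-1}(V\times U)$ monotonically related to $\gph T\cap\Psi_\lambda^{-1}(V\times U)$, set $y:=S(J(x)+\lambda x^*)$ for the single-valued localization $S$ of $(J+\lambda T)^{-1}$, note that $\bigl(y,\lambda^{-1}(J(x)+\lambda x^*-J(y))\bigr)$ lies in that localized graph, plug it into the monotonicity inequality to get $\la J(y)-J(x),y-x\ra=0$, and invoke \emph{strict convexity} of the norm to conclude $x=y$ and hence $x^*\in T(x)$. Strict convexity is the decisive ingredient in this direction, and it never enters your argument; you would need to incorporate some version of this step to complete the proof.
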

\begin{proof} To verify (i) $\Longrightarrow$(ii), let $T$ be locally maximal monotone of type $(B)$ around $(\ox,\ox^*)$ and then find a maximal monotone operator $\bar{T}$ on $X$ and a neighborhood $U\times V$ of $(\ox,\ox^*)$ with 
\begin{equation}\label{T-Tbar}
\gph \bar{T} \cap (U\times V) = \gph T \cap (U\times V).
\end{equation}
Since $J$ is continuous, the mapping $\Psi_{ \lambda}:X\times X^* \rightarrow X^*\times X$ defined by 
\begin{equation*}
\Psi_{\lambda}(x,x^*): =\big(J(x)+\lambda x^*,x\big) \ \text{ for all }\;(x,x^*)\in X\times X^*,\quad \lm>0
\end{equation*}
is a homeomorphism. By taking the image of $\Psi_{\lambda}$ on both sides of \eqref{T-Tbar}, we have
\begin{equation}\label{eq:W^*}
\gph (J+\lambda \bar{T})^{-1} \cap \Psi_{ \lambda}(U\times V) = \gph ( J+\lambda T)^{-1} \cap \Psi_{\lambda}(U\times V).
\end{equation}
The reflexivity of $X$ allows us to apply to $\bar T$ the fundamental Rockafellar's characterization of global maximal monotonicity telling us that  $(J+\lambda \bar{T})^{-1}$ is a single-valued operator from $X^*$ to $X$ which is {\em demicontinuous}, i.e., continuous from the strong topology on $X^*$ to the weak topology of $X$. Let us show that $(J+\lambda \bar{T})^{-1}$ is actually {\em continuous} from the strong topology of $X^*$ to the strong topology of $X$. Indeed, fix $x^*\in X^*$ and consider a sequence $\{x_k^*\}\subset X^*$ strongly converging to $x^*$ as $k\to\infty$. For each $k\in \N$, define
\begin{equation}\label{Chuan1}
x_k:= (J+\lambda \bar{T})^{-1}(x_k^*)\;\mbox{ and }\;x:= (J+\lambda \bar{T})^{-1}(x^*).
\end{equation}
We know by the demicontinuity of $(J+\lambda \bar{T})^{-1}$ that $x_k \xrightarrow{w}x$ as $k\to\infty$. It follows from \eqref{Chuan1} that there exist $y_k^*\in \bar{T}(x_k)$ as $k\in\N$ and $y^*\in \bar{T}(x)$ such that
\begin{equation}\label{Chuan2}
x_k^* = J(x_k) + \lambda y_k^*\;\mbox{ and }\;x^* = J(x) + \lambda y^*.
\end{equation}
Since $x_k^*\rightarrow x^*$ and $x_k \xrightarrow{w}x$, we clearly have the convergence
\begin{equation}\label{230}
\la x_k^*-x^*,x_k-x\ra \rightarrow 0 \;\mbox{ as }\;k\to\infty.
\end{equation}
Using \eqref{Chuan2} allows us to rewrite \eqref{230} as
\begin{equation}\label{Chuan3}
\la J(x_k)-J(x),x_k-x\ra + \lambda \la y_k^* - y^*,x_k-x\ra \rightarrow 0\;\mbox{ as }\;k\to\infty.
\end{equation}
By the global monotonicity of the operators $J$ and $\bar{T}$, both terms on the left-hand side of \eqref{Chuan3} are nonnegative, and hence $ \la J(x_k)-J(x),x_k-x\ra\searrow 0$ as $k\to\infty$. Combining the latter with the well-known estimate 
\begin{equation*}
\la J(x_k)-J(x),x_k-x\ra \ge \big(\|x_k\|-\|x\|\big)^2,\quad k\in\N,
\end{equation*}
gives us $\|x_k\|\rightarrow\|x\|$ as $k\to\infty$. Having $x_k\xrightarrow{w}x$ and $\|x_k\|\rightarrow \|x\|$, we arrive at the strong convergence $x_k\rightarrow x$, since the norm $\|\cdot\|$ under consideration possesses the Kadec-Klee property. This justifies the norm-to-norm continuity of $(J+\lambda\bar{T})^{-1}$.

Remembering that $\Psi_{\lambda}$ is a homeomorphism and that $U\times V$ is a neighborhood of $(\ox,\ox^*)$, we get that the image $\Psi_{\lambda}(U\times V)$ is a neighborhood of $(J(\ox)+\lambda \ox^*,\ox) = \Psi_{\lambda} (\ox,\ox^*)$. This allows us to choose neighborhoods $W$ of $J(\ox)+\lambda \ox^*$ and $Q$ of $\ox$ such that $W\times Q\subset \Psi_{\lambda}(U\times V)$. Since $(J+\lambda \bar{T})^{-1}$ is continuous at $J(\ox)+\lambda \ox^*$ and $Q$ is a neighborhood of $\ox=( J+\lambda \bar{T})^{-1}(J(\ox)+\lambda \ox^*)$, we may shrink $W$ if necessary so that $(J+\lambda \bar{T})^{-1}(W)\subset Q$. Combining the above with \eqref{eq:W^*} brings us to the equality
\begin{equation}\label{Q^*}
\gph (J+\lambda \bar{T})^{-1} \cap (W\times Q) = \gph ( J+\lambda T)^{-1} \cap (W\times Q),
\end{equation}
which ensures that $(J+\lambda T)^{-1}$ has a single-valued localization around $(J(\ox)+\lambda \ox^*,\ox)$. The continuity of this localization on $W$ follows from that of $(J+\lambda \bar{T})^{-1}$, and thus we get (ii).\vspace*{0.03in}

Let us now verify implication (ii)$\Longrightarrow$(iii). Supposing that (ii) holds gives us a neighborhood $V\times U$ of $(J(\ox)+\lambda\ox^*,\ox)$ such that the mapping $S:V \rightrightarrows U$ defined by $\gph S := \gph (J+ \lambda T)^{-1}\cap(V\times U)$ is single-valued on $V$ with $\dom S=V$. We aim at proving that $T$ is locally maximal monotone of type $(A)$ with respect to the neighborhood $\Psi_{\lambda}^{-1}(V\times U)$ of $(\ox,\ox^*)=\Psi_{\lambda}^{-1}(J(\ox)+\lambda\ox^*,\ox)$. Indeed, fix a pair $(x,x^*)\in \Psi_{\lambda}^{-1}(V\times U)$ satisfying the conditions
\begin{equation}\label{eq:max-proof1}
\la x^*-y^*,x-y\ra \ge 0
\ \text{ for all }\ (y,y^*)\in \gph T \cap \Psi_{\lambda}^{-1}(V\times U).
\end{equation}
It follows from $(x,x^*)\in \Psi_{\lambda}^{-1}(V\times U)$ that
$$
(J(x)+\lambda x^*,x)=\Psi_{\lambda}(x,x^*)\in \Psi_{\lambda}\big(\Psi_{\lambda}^{-1}(V\times U)\big) = V\times U,
$$
and hence $x\in U$ and $J(x)+\lambda x^*\in V$. By $\dom S=V$ and $\dom S^{-1} = U$, the vector $y:=S(J(x)+\lambda x^*)$ belongs to $U$. From the definition of $S$, we also have that $y\in (J+\lambda T)^{-1}(J(x)+\lambda x^*)$ and $J(x)+\lambda x^*\in (J+\lambda T)(y)$, which yield
\begin{equation}\label{304}
\lambda^{-1}\big(J(x)+\lambda x^*-J(y)\big) \in T(y).
\end{equation}
Observe also the relationships
\begin{equation}\label{305}
\Psi_{\lambda}\big(y,\lambda^{-1}(J(x)+\lambda x^*-J(y))\big)=(J(x) + \lambda x^*,y) \in V\times U.
\end{equation}
It follows from \eqref{304} and \eqref{305} that $\big(y,\lambda^{-1}(J(x)+\lambda x^*-J(y))\big)\in \gph T \cap \Psi_{\lambda}^{-1}(V\times U)$. Therefore, we deduce from \eqref{eq:max-proof1} that
\begin{equation*}
\lambda^{-1}\la J(y)-J(x),x-y\ra \ge 0
\end{equation*}
giving us $\la J(y)-J(x),y-x\ra =0$ due to the global monotonicity of $J$. By the strict convexity of $\|\cdot\|$, the latter implies that $x=y$. Hence $x^* \in T(x)$, this means that $T$ is locally maximal monotone around $(\ox,\ox^*)$ of type $(A)$, which justifies (iii).  The last implication (iii)$\Longrightarrow$(i) was verified in Proposition~\ref{propo:R->P} for general Banach spaces. This completes the proof of the theorem. 
\end{proof}\vspace*{-0.05in}

Having in mind the equivalence between types (A) and (B) in Theorem~\ref{theo:loc-Minty}, we use in what follows the term ``{\em local maximal monotonicity}" for set-valued mappings on reflexive spaces.\vspace*{0.03in}

Next we address the question about {\em preservation} of local maximal monotonicity under summation of two set-valued operators in reflexive Banach spaces. Rockafellar's result in \cite[Theorem~1]{Rockafellar70} establishes the preservation of {\em global maximal monotonicity} for the sum $T_1+T_2$ of two maximal monotone operators in reflexive spaces under {\em qualifications condition}
\begin{equation}\label{Rocka-qualifi}
\dom T_1 \cap \mathrm{int}(\dom T_2)\ne\emptyset.
\end{equation}

We are not familiar with any result concerning preservation of {\em local maximal monotonicity} under summation. Such a local preservation theorem is derived below in reflexive spaces under a certain inner semicontinuity condition, which is shown to be essential for the fulfillment of the local preservation result. The  needed {\em graphical} inner semicontinuity condition is taken from \cite[Definition~1.63(i)]{Mordukhovich06} being different from the standard inner/lower semicontinuity of multifunctions at domain points; see \cite{Mordukhovich06,Rockafellar98}. We say that $F\colon X\tto Y$ between two Banach spaces is (graphically)  {\em inner semicontinuous} at $(\ox,\oy)\in\gph F$ if
\begin{equation}\label{isc}
\forall x_k\rightarrow \ox \ \ \  \exists\, N\subset \N\;\mbox{ such that }\;\N\setminus N\;\text{ is finite and }\;y_k \overset{N}{\rightarrow} \oy\; \text{ with }\;y_k\in F(x_k).
\end{equation}\vspace*{-0.2in}

\begin{Theorem}\label{theo:preser-P-sum}
Let $X$ be a reflexive Banach space, and let $T_1,T_2:X\rightrightarrows X^*$ be set-valued mappings. Take $\ox\in \mathrm{int}(\dom T_1) \cap \dom T_2$, $\ox^*_1\in T_1(\ox)$, and $\ox^*_2\in T_2(\ox)$. If both mappings $T_1$ and $T_2$ are locally maximal monotone around $(\ox,\ox^*_1)$ and $(\ox,\ox^*_2)$, respectively, then the sum $T:=T_1 +T_2$ is also locally maximal monotone around $(\ox,\ox^*_1+\ox_2^*)$ provided that the mapping $T_1$ is graphically inner semicontinuous at $(\ox,\ox^*_1)$.
\end{Theorem}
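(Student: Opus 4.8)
The plan is to build a global maximal monotone ``envelope'' of $T:=T_1+T_2$ near $(\ox,\ox_1^*+\ox_2^*)$ out of the envelopes of the two summands. By local maximal monotonicity of type~(B) (Definition~\ref{defi:local-max-R}(ii)) choose maximal monotone operators $\bar T_1,\bar T_2:X\tto X^*$ and neighborhoods $U_i\times V_i$ of $(\ox,\ox_i^*)$ with $\gph\bar T_i\cap(U_i\times V_i)=\gph T_i\cap(U_i\times V_i)$ for $i=1,2$. It then suffices to prove that (I)~$\bar T:=\bar T_1+\bar T_2$ is maximal monotone, and (II)~$\gph\bar T\cap(U\times V)=\gph T\cap(U\times V)$ for some neighborhood $U\times V$ of $(\ox,\ox_1^*+\ox_2^*)$. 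Indeed, (I) and (II) say exactly that $T$ is locally maximal monotone of type~(B) around $(\ox,\ox_1^*+\ox_2^*)$, and Theorem~\ref{theo:loc-Minty} then upgrades this to (the unified notion of) local maximal monotonicity in the reflexive space $X$.

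For (I), by \cite[Theorem~1]{Rockafellar70} and the reflexivity of $X$ it suffices to verify $\inte(\dom\bar T_1)\cap\dom\bar T_2\ne\emptyset$, and I claim $\ox$ belongs to this set. Since $\ox_2^*\in T_2(\ox)$ and $\ox_2^*\in V_2$, the local identification for $T_2$ gives $\ox_2^*\in\bar T_2(\ox)$, hence $\ox\in\dom\bar T_2$. To see $\ox\in\inte(\dom\bar T_1)$ — the first use of graphical inner semicontinuity of $T_1$ — note that \eqref{isc} yields $\delta>0$ with $T_1(x)\cap V_1\ne\emptyset$ whenever $\|x-\ox\|<\delta$ (otherwise some $x_k\to\ox$ would have $T_1(x_k)\cap V_1=\emptyset$ for all $k$, contradicting \eqref{isc}); shrinking $\delta$ so that $B(\ox,\delta)\subset U_1$ and using $\gph T_1\cap(U_1\times V_1)=\gph\bar T_1\cap(U_1\times V_1)$ gives $B(\ox,\delta)\subset\dom\bar T_1$. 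A second, more delicate use of \eqref{isc} pins down $\bar T_1$ at the base point: since $\ox\in\inte(\dom\bar T_1)$, the maximal monotone operator $\bar T_1$ is locally bounded near $\ox$ with weakly compact convex values and closed graph, so testing monotonicity of $\bar T_1$ along rays $\ox+td$ shows that every $y\in\bar T_1(\ox+td)$ satisfies $\langle y,d\rangle\ge\langle y',d\rangle$ for all $y'\in\bar T_1(\ox)$; letting $t\downarrow0$ along points supplied by \eqref{isc} forces $\langle\ox_1^*,d\rangle\ge\langle y',d\rangle$ for all $y'\in\bar T_1(\ox)$ and all $d$, and applying this with $\pm d$ gives $\bar T_1(\ox)=\{\ox_1^*\}$. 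Together with local boundedness and graph closedness this yields $\bar T_1(x)\to\{\ox_1^*\}$ as $x\to\ox$, so after shrinking $U_1$ we may assume $\bar T_1(x)\subset V_1$, i.e.\ $\bar T_1(x)=T_1(x)\cap V_1$, for all $x$ near $\ox$.

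It remains to establish (II), which is the heart of the matter. Fix a small neighborhood $U\times V$ of $(\ox,\ox_1^*+\ox_2^*)$ inside the domains above, tight enough that $V-\bar T_1(x)\subset V_2$ for all $x\in U$. For the inclusion ``$\subset$'': if $(x,x^*)\in\gph\bar T\cap(U\times V)$, write $x^*=z_1+z_2$ with $z_i\in\bar T_i(x)$; then $z_1\in\bar T_1(x)\subset V_1$, so $z_1\in T_1(x)$, and $z_2=x^*-z_1\in V-\bar T_1(x)\subset V_2$, so $z_2\in\bar T_2(x)\cap V_2=T_2(x)\cap V_2\subset T_2(x)$, whence $x^*\in T(x)$. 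For the reverse inclusion, take $(x,x^*)\in\gph T\cap(U\times V)$ and a representation $x^*=a+b$ with $a\in T_1(x)$, $b\in T_2(x)$, and use \eqref{isc} to produce $w\in T_1(x)\cap V_1=\bar T_1(x)$ with $w$ close to $\ox_1^*$. The decisive point — and the step I expect to be the main obstacle — is to show that such a decomposition must in fact have $a\in V_1$ (equivalently $b\in V_2$): ruling out a ``far'' component of a decomposition of a point $x^*$ arbitrarily close to $\ox_1^*+\ox_2^*$ requires combining the inner-semicontinuity selection $w\in\bar T_1(x)$, the contraction $\bar T_1(x)\to\{\ox_1^*\}$ near $\ox$, the local monotonicity of $T_1$ on $U_1\times V_1$ and of $T_2$ on $U_2\times V_2$, and the maximal monotonicity of $\bar T$ from (I), presumably via a contradiction argument along a sequence $(x_k,x_k^*)\to(\ox,\ox_1^*+\ox_2^*)$ with persistently far components that violates monotonicity. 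Once $a\in V_1$ is known, $a\in\bar T_1(x)$ and $b=x^*-a\in V_2\cap T_2(x)=\bar T_2(x)$, so $x^*\in\bar T(x)$; this proves (II), and together with (I) it shows, via Theorem~\ref{theo:loc-Minty}, that $T=T_1+T_2$ is locally maximal monotone around $(\ox,\ox_1^*+\ox_2^*)$.
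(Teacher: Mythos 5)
Your overall architecture coincides with the paper's: extend $T_1,T_2$ to globally maximal monotone operators $\bar T_1,\bar T_2$ agreeing with them on $U\times V_1$ and $U\times V_2$, use inner semicontinuity to get $\ox\in\inte(\dom\bar T_1)$ so that Rockafellar's sum theorem makes $\bar T=\bar T_1+\bar T_2$ maximal monotone, and then match $\gph T$ with $\gph\bar T$ on a small neighborhood of $(\ox,\ox^*_1+\ox^*_2)$. Your step (I) and the inclusion $\gph\bar T\cap(U\times V)\subset\gph T\cap(U\times V)$ are carried out correctly, and in more detail than in the paper. But the proof is not complete: the reverse inclusion, which you yourself flag as ``the step I expect to be the main obstacle,'' is left as a conjecture (``presumably via a contradiction argument along a sequence\dots''). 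That is a genuine gap, and it is exactly where the theorem lives.

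What closes it in the paper is not a sequential contradiction argument but a quantitative containment. One first fixes $r>0$ with $B_r(\ox^*_2)\subset V_2$, chooses neighborhoods $W_1\subset V_1$ of $\ox^*_1$ and $W_2$ of $\ox^*_2$ with $\mathrm{diam}(W_1),\mathrm{diam}(W_2)<r/2$, and then invokes the inner semicontinuity hypothesis to produce a neighborhood $Q\subset U$ of $\ox$ with $T_1(Q)\cup\bar T_1(Q)\subset W_1$ --- that is, \emph{every} value of $T_1$, not only of $\bar T_1$, at points of $Q$ lies in $W_1$. With $W:=W_1+W_2$, any $(x,x^*)\in\gph T\cap(Q\times W)$ decomposed as $x^*=a+b$ with $a\in T_1(x)$, $b\in T_2(x)$ then automatically has $a\in W_1\subset V_1$, hence $a\in\bar T_1(x)$; and writing $x^*=z_1+z_2$ with $z_i\in W_i$ gives $\|b-\ox^*_2\|\le\|z_2-\ox^*_2\|+\|z_1-a\|\le\mathrm{diam}(W_2)+\mathrm{diam}(W_1)<r$, so $b\in V_2$ and therefore $b\in\bar T_2(x)$, yielding $x^*\in\bar T(x)$. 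Note that your own intermediate lemmas deliver this containment only for $\bar T_1$: the graph identification controls $T_1(x)\cap V_1$ but says nothing about values of $T_1$ outside $V_1$, which is precisely why your reverse inclusion stalls. The missing ingredient is thus the passage from ``$\bar T_1(x)$ shrinks to $\{\ox^*_1\}$ near $\ox$'' to ``$T_1(x)\subset W_1$ for $x$ near $\ox$.'' Your instinct that something beyond the local graph data is required here is sound: the far values of $T_1$ are not constrained by local monotonicity on $U\times V_1$ or by the maximality of $\bar T$, so the contradiction argument you sketch has nothing to push against; any complete proof must extract this control over all of $T_1(x)$ from the hypothesis on $T_1$ itself, as the paper does at this exact step.
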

\begin{proof} 
By the assumptions made, we find neighborhoods $U\times V_1$ and $U\times V_2$ of $(\ox,\ox^*_1)$ and $(\ox,\ox^*_2)$, respectively, such that
\begin{eqnarray}\label{eq:T1bar}
\gph \bar{T}_1 \cap (U\times V_1) = \gph T_1 \cap (U\times V_1),\quad\gph \bar{T}_2 \cap (U\times V_2) = \gph T_2 \cap (U\times V_2)
\end{eqnarray}
with the globally maximal monotone operators $\bar T_1$ and $\bar T_2$. We have $\ox\in \operatorname{int}(\dom \bar{T}_1)$ since $T_1$ (and hence $\bar{T}_1$) is graphically inner semicontinuous at $(\ox,\ox^*_1)$. It follows from \cite[Theorem~1]{Rockafellar70} discussed above that the sum $\bar{T} := \bar{T}_1 + \bar{T}_2$ is maximal monotone on $X$. Select $r>0$ so that $B_{r}(\ox^*_2)\subset V_2$ for the ball centered at $\ox^*_2$ with radius $r$. Consider neighborhoods $W_1\subset V_1$ of $\ox^*_1$ and $W_2$ of $\ox^*_2$ satisfying the condition
\begin{equation*}
\max\{\mathrm{diam}(W_1),\;\mathrm{diam} (W_2)\} < r/2,
\end{equation*}
where $\mathrm{diam}(\Omega):=\sup\{\|x-y\|\mid x,y\in \Omega\}$ is the diameter of a given set. Due to the inner semicontinuity of $T_1$ and hence of $\bar{T}_1$ at $(\ox,\ox^*_1)$, there is a neighborhood $Q\subset U$ of $\ox$ for which
$$
T_1 (Q) \cup\bar{T}_1 (Q) \subset W_1.
$$
With $W:=W_1+W_2$, observe that $Q\times W$ is a neighborhood of $(\ox,\ox^*_1+\ox_2^*)$. To complete the proof of the theorem, it remains to show that $\gph T \cap (Q\times W) = \gph \bar{T}\cap (Q\times W)$. Indeed, picking $(x,y)\in \gph \bar{T} \cap (Q\times W)$ gives us 
\begin{eqnarray}\label{eq:T}
y = y_1 + y_2\in W\;\mbox{ with }\;y_1 \in \bar{T}_1 (x),\; y_2\in \bar{T}_2 (x),\;x \in Q.
\end{eqnarray}
Since $x\in Q$, we have $y_1\in \bar{T}_1 (x) \subset W_1 \subset V_1$, and hence it follows from \eqref{eq:T1bar} that $(x,y_1)\in \gph T_1$. Using $y \in W=W_1+W_2$ allows us to find $z_1 \in W_1$ with $y - z_1 \in W_2$. Observe that
\begin{eqnarray*}
\left\| y - y_1 - \ox^*_2 \right\| &=& \left\| (y - z_1 - \ox^*_2) + (z_1 - y_1) \right\| \\
&\le& \left\|(y - z_1) - \ox^*_2 \right\| + \left\| z_1 - y_1\right\| \\
&\le& \mathrm{diam}(W_2) + \mathrm{diam}(W_1)< r,
\end{eqnarray*}
and thus $y_2 = y-y_1 \in B_{r}(\ox^*_2) \subset V_2$, which yields $(x,y_2)\in \gph \bar{T}_2 \cap (U\times V_2) = \gph T_2 \cap (U\times V_2)$. Since $(x,y_2)\in \gph T_2$ and $(x,y) \in \gph T$, we arrive at $\gph \bar{T}\cap (Q\times W) \subset \gph T\cap (Q\times W)$. The reverse inclusion can be checked similarly. Therefore,
$\gph T\cap (Q\times W)  = \gph \bar{T}\cap (Q\times W)$, which verifies the local maximal monotonicity of $T$ relative to $Q\times W$.
\end{proof}\vspace*{-0.05in}

To conclude this section, we present an example showing that the inner semicontinuity assumption is {\em essential} for the fulfillment of the preservation result in Theorem~\ref{theo:preser-P-sum}.\vspace*{-0.05in}

\begin{Example}
\rm Consider the two multifunctions $T_1,T_2\colon\R^2 \rightrightarrows \R^2$ generated by the {\em normal cone mappings} to convex closed sets as follows
\begin{equation*}
    T_i(x,y):=N\big((x,y);\O_i\big),\;i=1,2,\;\mbox{ for }\;\O_1:=\big\{(x,y)\;|\;y\ge x^2\big\}\;\mbox{ and }\;\O_2:=\R\times\{0\}.    
\end{equation*}
It is well known from convex analysis that $T_1,T_2$ are maximal monotone operators on $\R^2$. Their sum $T:=T_1+T_2$ is calculated by
\begin{equation*}
T(x,y)=\left\{\begin{array}{ll}
    \{(0,t)\in\R^2\;\big|\;t\in\R\}&\mbox{for }\;(x,y)=(0,0),\\
\emp&\mbox{otherwise}.
\end{array}\right.
\end{equation*}
We can easily see that $T$ is not maximally monotone on  $\R^2$. Indeed, by adding any point $\big((\al,0),(0,0)\big)$ with $\al>0$ to the graph of $T$, we get another monotone operator, which contradicts the maximal monotonicity of $T$. Observe that the qualification condition 
\eqref{Rocka-qualifi} clearly fails in this case since $\inte(\dom T_2)=\emp$. 

Let us now check whether the sum operator $T$ is {\em locally} maximal monotone around the point $(0,0)\in T_1(0,0) \cap T_2(0,0)$. Take any neighborhood $U\times V$ of $(0,0)\in\R^2\times\R^2$ and show that $T$ is {\em not} maximal monotone with respect to $U\times V$, i.e., there exists a monotone operator $S:\R^2 \rightrightarrows \R^2$ with respect to $U\times V$ such that the set $\gph T \cap (U\times V)$ is properly contained in $\gph S \cap (U\times V)$. Indeed, constructing $S$ by
\begin{equation*}
S(x,y):=\begin{cases}
T(x,y)&\mbox{ for }\;x=y=0, \\
\{(0,0)\} &\mbox{ for }\;x\neq 0,\; y=0,\\
\emptyset&\text{otherwise},
\end{cases}
\end{equation*}
we see that this mapping satisfies all the required conditions. Observe that $(0,0)\notin\inte(\dom T_1)\cup\inte(\dom T_2)$, and that neither $T_1$ nor $T_2$ in inner semicontinuous at $(0,0)$. 
\end{Example}\vspace*{-0.2in}

\section{Local Strong Monotonicity in Smooth Banach Spaces}\label{sec:strong}\vspace*{-0.05in} 

This section is devoted to the study of {\em strong local monotonicity} and its {\em maximality} versions for set-valued mappings defined on Banach spaces. \vspace*{-0.03in}

\begin{Definition}\label{str-mon} Let $T\colon X\tto X^*$ be a set-valued mapping on a Banach space, let $(\ox,\ox^*)\in\gph T$, and let $\sigma>0$. Then we say that:

{\bf(i)} $T$ is {\sc locally strongly monotone} around $(\ox,\ox^*)$ with modulus $\sigma$ if there are a neighborhood $U\times V$ of $(\ox,\ox^*)$ and a globally $\sigma$-strongly monotone operator $\bar{T}:X\rightrightarrows X^*$ with
\begin{eqnarray}\label{eq:local-strong}
\gph \bar{T} \cap (U\times V) = \gph T \cap (U\times V).
\end{eqnarray}

{\bf(ii)} $T$ is {\sc locally strongly maximal monotone of type $(A)$} around $(\ox,\ox^*)$ with modulus $\sigma$ if there is a neighborhood $U\times V$ of $(\ox,\ox^*)$ such that $T$ is $\sigma$-strongly monotone with respect to $U\times V$, and such that $\gph T\cap (U\times V) = \gph S\cap (U\times V)$ for any mapping $S:X\rightrightarrows X^*$, which is monotone with respect to $U\times V$ and satisfies the inclusion $\gph T\cap (U\times V)\subset \gph S \cap (U\times V)$.

{\bf(iii)}  $T$ is {\sc locally strongly maximal monotone of type $(B)$} around $(\ox,\ox^*)$ with modulus $\sigma$ if there are a neighborhood $U\times V$ of $(\ox,\ox^*)$ and a $\sigma$-strongly maximal monotone mapping $\bar{T}$ such that \eqref{eq:local-strong} holds.
\end{Definition}\vspace*{-0.05in}

As we see, the strong monotonicity notions involve the duality mapping $J$ from \eqref{duality}, which is {\em single-valued} if and only if the space $(X,\|\cdot\|)$ is {\em G\^ateaux smooth}. For simplicity and definiteness, this property is {\em assumed} without further mentioning in the reminder of this section.\vspace*{0.03in}

For any $\sigma\ne 0$, consider the \textit{vertical shear mapping/vertical transvection} $\Phi_{\sigma}:X\times X^* \rightarrow X\times X^*$ defined by 
\begin{equation}\label{eq:shear}
\Phi_{\sigma}(x,x^*):=\big(x,x^*+\sigma J(x)\big)\ \text{ whenever }\;(x,x^*)\in X\times X^*,
\end{equation}
which is a {\em bijection} (actually, a homeomorphism if $\|\cdot\|$ is Fr\'echet smooth) from $X\times X^*$ to itself. It is easy to check that for any set-valued mapping $T:X\rightrightarrows X^*$, we have 
\begin{equation}\label{T-T+J}
\gph T \overset{\Phi_{\sigma}}{\longmapsto} \gph (T+\sigma J)\overset{\Phi_{-\sigma}}{\longmapsto} \gph T .
\end{equation}
To illustrate \eqref{eq:shear}, look at the simplest case when $X=\R$. Here the vertical shear mapping takes the point $(x,y)$ to the point $(x^\prime,y^\prime)$, where 
\begin{equation*}
\begin{pmatrix}
x^\prime \\ y^\prime 
\end{pmatrix} = \begin{pmatrix}
1 & 0 \\ \sigma & 1 
\end{pmatrix}
\begin{pmatrix}
x \\ y
\end{pmatrix}.
\end{equation*}
The vertical shear displaces points to the right of the $y$-axis up or down, depending on the sign of $\sigma$. It leaves vertical lines invariant, but tilts all other lines around the point where they meet the $y$-axis. Horizontal lines, in particular, get tilted to become lines with slope $\sigma$.\vspace{0.03in}

To proceed further, observe that the (global and local) 
monotonicity and strong monotonicity notions for set-valued mappings can be applied to {\em sets} by just considering mappings whose graphs are exactly the sets. Having this in mind, we now present useful discussions involving set monotonicity via the vertical shear mapping \eqref{eq:shear}.\vspace*{-0.05in}

\begin{Remark}\label{phi}
\rm It follows from \eqref{T-T+J} that whenever $\sigma>0$, we have:

{\bf(i)} $\Phi_{\sigma}$ carries monotone sets into $\sigma$-strongly monotone sets. Moreover, we know from Proposition~\ref{prop:preser} that $\Phi_{\sigma}$ also maps maximal monotone sets into $\sigma$-strongly maximal monotone sets provided that $X$ is reflexive.

{\bf(ii)} If $W\subset X\times X^*$ is monotone with respect to $U\times V$, then $\Phi_{\sigma}(W)$ is $\sigma$-strongly monotone with respect to $\Phi_{\sigma}(U\times V)$.

{\bf(iii)} $\Phi_{-\sigma}$ carries $\sigma$-strongly monotone sets into monotone sets. Furthermore, Proposition~\ref{prop:preser} says that it also maps $\sigma$-strongly maximal monotone sets into maximal monotone sets.

{\bf(iv)} If $W\subset X\times X^*$ is $\sigma$-strongly monotone with respect to $U\times V$, then $\Phi_{-\sigma}(W)$ is monotone with respect to $\Phi_{-\sigma}(U\times V)$.

{\bf(v)} For any $U\times V \subset X\times X^*$ and for any $\sigma \neq 0$, it holds that  
\begin{eqnarray*}
\Phi_{\sigma}(U\times V)=\left\{(x,x^*) \in X\times X^* \;\big|\; x\in U,\ x^*-\sigma J (x) \in V\right\}.
\end{eqnarray*}
If furthermore $(X,\|\cdot\|)$ is Fr\'echet smooth and  $U\times V$ is open, then $ \Phi_{\sigma}(U\times V)$ is open as well.
\end{Remark}\vspace*{-0.03in}

Using the above definitions, properties of the vertical shear transformation $\Phi_{\sigma}$, and Proposition~\ref{prop:preser} allows us to verify the preservation of local maximal monotonicity of type $(B)$ under shifts.\vspace*{-0.05in}

\begin{Proposition}\label{prop:P-strP}
Let $(X,\|\cdot\|)$ be a Fr\'echet smooth space. Consider a set-valued mapping $T:X\rightrightarrows X^*$ and a graph point $(\ox,\ox^*)\in \gph T$. Then for any $\sigma >0$, we have:

{\bf(i)} If $X$ is reflexive and $T$ is locally maximal monotone around $(\ox,\ox^*)$, then $T+\sigma J$ is locally strongly maximal monotone of type $(B)$ around $(\ox,\ox^*+\sigma J( \ox))$ with modulus $\sigma$.

{\bf(ii)} If $T$ is locally strongly maximal monotone of type $(B)$ around $(\ox,\ox^*)$ with modulus $\sigma$, then $T-\sigma J$ is locally maximal monotone of type $(B)$ around $(\ox,\ox^*-\sigma J(\ox))$.
\end{Proposition}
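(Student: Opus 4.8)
The plan is to prove both assertions by transporting the local equivalence \eqref{eq:local-strong}--\eqref{eq:local-mono} through the vertical shear $\Phi_{\sigma}$ and then invoking Proposition~\ref{prop:preser} to upgrade global maximality to global strong maximality (and vice versa). First I would start with (i): by the assumed local maximal monotonicity of type $(B)$ of $T$ around $(\ox,\ox^*)$, fix a maximal monotone operator $\bar T\colon X\tto X^*$ and an open neighborhood $U\times V$ of $(\ox,\ox^*)$ with $\gph\bar T\cap(U\times V)=\gph T\cap(U\times V)$. Applying $\Phi_{\sigma}$ to both sides of this equality and using \eqref{T-T+J} yields
\begin{equation*}
\gph(\bar T+\sigma J)\cap\Phi_{\sigma}(U\times V)=\gph(T+\sigma J)\cap\Phi_{\sigma}(U\times V).
\end{equation*}
By Remark~\ref{phi}(v) and the Fr\'echet smoothness of $\|\cdot\|$, the set $\Phi_{\sigma}(U\times V)$ is an open neighborhood of $\Phi_{\sigma}(\ox,\ox^*)=(\ox,\ox^*+\sigma J(\ox))$. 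Since $X$ is reflexive and $\bar T$ is maximal monotone, Proposition~\ref{prop:preser}(ii),(iv) (the latter applicable because $\|\cdot\|$, being Fr\'echet smooth, is a fortiori G\^ateaux differentiable on $X\setminus\{0\}$) guarantees that $\bar T+\sigma J$ is $\sigma$-strongly maximal monotone on $X$. Thus $\bar T+\sigma J$ serves as the required global $\sigma$-strongly maximal monotone operator, and the displayed equality witnesses that $T+\sigma J$ is locally strongly maximal monotone of type $(B)$ around $(\ox,\ox^*+\sigma J(\ox))$ with modulus $\sigma$, which proves (i).

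For (ii) the argument is the mirror image using $\Phi_{-\sigma}$. Given that $T$ is locally strongly maximal monotone of type $(B)$ around $(\ox,\ox^*)$ with modulus $\sigma$, pick a $\sigma$-strongly maximal monotone $\bar T$ and an open neighborhood $U\times V$ of $(\ox,\ox^*)$ satisfying \eqref{eq:local-strong}. Apply $\Phi_{-\sigma}$ to both sides; by \eqref{T-T+J} (read with $\sigma$ replaced by $-\sigma$, i.e. the relation $\gph S\xmapsto{\Phi_{-\sigma}}\gph(S-\sigma J)$) this gives
\begin{equation*}
\gph(\bar T-\sigma J)\cap\Phi_{-\sigma}(U\times V)=\gph(T-\sigma J)\cap\Phi_{-\sigma}(U\times V),
\end{equation*}
and Remark~\ref{phi}(v) again shows $\Phi_{-\sigma}(U\times V)$ is an open neighborhood of $\Phi_{-\sigma}(\ox,\ox^*)=(\ox,\ox^*-\sigma J(\ox))$. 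Finally, Proposition~\ref{prop:preser}(i) tells us that the $\sigma$-strong maximal monotonicity of $\bar T$ implies the maximal monotonicity of $\bar T-\sigma J$ on $X$; so $\bar T-\sigma J$ is a global maximal monotone operator coinciding with $T-\sigma J$ on the neighborhood above, giving the local maximal monotonicity of type $(B)$ of $T-\sigma J$ around $(\ox,\ox^*-\sigma J(\ox))$. This completes (ii).

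The only genuine point requiring care—and hence the main obstacle—is bookkeeping with $\Phi_{\sigma}$: one must verify that applying a bijection to a set intersection distributes as $\Phi_{\sigma}(A\cap B)=\Phi_{\sigma}(A)\cap\Phi_{\sigma}(B)$ (true since $\Phi_{\sigma}$ is injective), that $\Phi_{\sigma}$ does indeed send $\gph T$ onto $\gph(T+\sigma J)$ and not merely into it (this is exactly \eqref{T-T+J}), and that the resulting neighborhood is open (which is where Fr\'echet rather than merely G\^ateaux smoothness is used, via Remark~\ref{phi}(v)). Everything else is a direct citation of Proposition~\ref{prop:preser}; no approximation or compactness arguments are needed, and reflexivity enters only in part (i) through Proposition~\ref{prop:preser}(ii).
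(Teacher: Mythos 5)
Your proof is correct and follows essentially the same route as the paper: apply the vertical shear $\Phi_{\pm\sigma}$ to the graphical equality defining local (strong) maximal monotonicity of type $(B)$ and invoke Proposition~\ref{prop:preser} to convert the global witness $\bar T$ into $\bar T\pm\sigma J$. Your explicit appeal to Proposition~\ref{prop:preser}(iv) to upgrade the maximality of $\bar T+\sigma J$ to $\sigma$-strong maximality, and to Remark~\ref{phi}(v) for the openness of $\Phi_{\sigma}(U\times V)$, is in fact slightly more careful than the paper's own wording, which cites only part (ii).
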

\begin{proof}
To verify (i), we deduce from the assumptions in (i) that there exist a neighborhood $U\times V$ of $(\ox,\ox^*)$ and a maximal monotone operator $\bar{T}:X\rightrightarrows X^*$ such that
\begin{eqnarray*}
\gph \bar{T} \cap (U\times V) = \gph T \cap (U\times V).
\end{eqnarray*}
Then $\Phi_{\sigma} \big[ \gph \bar{T} \cap (U\times V)\big] = \Phi_{\sigma} \big[ \gph T \cap (U\times V)\big]$ while giving us
\begin{eqnarray}\label{TTbar}
\gph\big(\bar{T}+\sigma J\big) \cap \Phi_{\sigma}(U\times V) = \gph (T+\sigma J)\cap \Phi_{\sigma}(U\times V),
\end{eqnarray}
where $\Phi_{\sigma}(U\times V)$ is a neighborhood of $(\ox,\ox^*+\sigma J (\ox))$. It follows from Proposition~\ref{prop:preser}(ii) that $\bar{T}+\sigma J$ is $\sigma$-strongly maximal monotone on $X$. Thus we get from \eqref{TTbar} that $T+\sigma J$ is locally strongly maximal monotone of type $(B)$ around $(\ox,\ox^*+\sigma J(\ox))$ with modulus $\sigma$. Assertion (ii) is verified in the same way by using the shear mapping $\Phi_{-\sigma}$ and the result of Proposition~\ref{prop:preser}(i).
\end{proof}\vspace*{-0.05in}

Now we are in a position to reveal close relationships
between both type $(A)$ and $(B)$ of local strong maximal monotonicity of operators and single-valued continuous localizations of their inverses. Recall that the spaces in question are supposed to be G\^ateaux smooth. \vspace*{-0.03in}

\begin{Theorem}\label{coro:loc-Minty-strong}
Let $(X,\|\cdot\|)$ be a reflexive space, and let 
$T:X\rightrightarrows X^*$ be a set-valued mapping with $(\ox,\ox^*)\in \gph T$. For any $\sigma >0$, consider the following assertions:

{\bf(i)} $T$ is locally strongly maximal monotone of type $(B)$ around $(\ox,\ox^*)$ with modulus $\sigma$.

{\bf(ii)} $T$ is locally strongly monotone around $(\ox,\ox^*)$ with modulus $\sigma$ and its inverse $T^{-1}$ has a continuous single-valued localization around $(\ox^*,\ox)$.

{\bf(iii)} $T$ is locally strongly maximal monotone of type $(A)$ around $(\ox,\ox^*)$ with modulus $\sigma$.\\[0.5ex]
Then we always have the equivalence {\rm(i)$\Longleftrightarrow$(iii)}. The other equivalence {\rm(i)$\Longleftrightarrow$(ii)} holds provided that the norm $\|\cdot\|$ has the properties listed in \eqref{Troyanski-Asplund}.
\end{Theorem}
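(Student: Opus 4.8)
The plan is to transfer everything to the locally maximal monotone setting via the vertical shear $\Phi_{\sigma}$ from \eqref{eq:shear} and then invoke Theorem~\ref{theo:loc-Minty}. First I would establish the equivalence (i)$\Longleftrightarrow$(iii), which should hold in any reflexive G\^ateaux smooth space without the extra geometric hypotheses. The key observation is Remark~\ref{phi}: the map $\Phi_{-\sigma}$ carries $\sigma$-strongly monotone sets (with respect to a given window) into monotone sets (with respect to the image window), and carries $\sigma$-strongly maximal monotone sets into maximal monotone ones, with $\Phi_{\sigma}$ undoing this; moreover $\Phi_{\pm\sigma}$ is a bijection of $X\times X^*$. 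Applying $\Phi_{-\sigma}$ to $\gph T$ sends $(\ox,\ox^*)$ to $(\ox,\ox^*-\sigma J(\ox))$ and, using part (i) of Proposition~\ref{prop:preser} together with the set-level statements in Remark~\ref{phi}(iii),(iv), one checks that type-$(B)$ local strong maximality of $T$ with modulus $\sigma$ around $(\ox,\ox^*)$ is equivalent to type-$(B)$ local maximal monotonicity of $T-\sigma J$ around $(\ox,\ox^*-\sigma J(\ox))$ — this is exactly Proposition~\ref{prop:P-strP}(ii) and its converse via Proposition~\ref{prop:P-strP}(i) (the converse needs only reflexivity and smoothness, not Fr\'echet smoothness, once one observes $\Phi_{\pm\sigma}$ is a bijection and the set statements in Remark~\ref{phi} do not require openness). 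The same shear argument, applied to the definitions verbatim at the level of sets and windows, shows type-$(A)$ local strong maximality of $T$ with modulus $\sigma$ around $(\ox,\ox^*)$ is equivalent to type-$(A)$ local maximal monotonicity of $T-\sigma J$ around $(\ox,\ox^*-\sigma J(\ox))$. By Proposition~\ref{propo:R->P} (type $(A)$ $\Rightarrow$ type $(B)$, valid in any Banach space), the reverse implication (i)$\Rightarrow$(iii) always holds; for the forward direction, given type $(B)$ for $T$ I would pass to $T-\sigma J$, apply Theorem~\ref{theo:loc-Minty}'s equivalence (i)$\Leftrightarrow$(iii) — but note that equivalence requires the Troyanski--Asplund norm \eqref{Troyanski-Asplund}. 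So for the unconditional (i)$\Leftrightarrow$(iii) I would instead argue directly: reflexivity alone gives, by Proposition~\ref{prop:preser}(ii),(iv), that the maximal monotone extension $\bar T$ of the localization of $T-\sigma J$ can be shifted back by $\sigma J$ to a $\sigma$-strongly maximal monotone extension, and then the type-$(A)$ maximality follows because any set-level extension respecting the window, when shifted by $\Phi_{-\sigma}$, becomes a monotone extension that must coincide with the monotone localization of $T-\sigma J$ on that window.

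For the second equivalence (i)$\Longleftrightarrow$(ii), I would again reduce via $\Phi_{-\sigma}$ to a statement about $T-\sigma J$, now using the full strength of Theorem~\ref{theo:loc-Minty}, which is why the norm is assumed to satisfy \eqref{Troyanski-Asplund}. Specifically, I claim that the inverse $T^{-1}$ having a continuous single-valued localization around $(\ox^*,\ox)$ is equivalent, for each fixed $\lambda>0$, to the resolvent $(J+\lambda(T-\sigma J))^{-1}$ having a continuous single-valued localization at the appropriate point — here one should specialize to a convenient value of $\lambda$, and the natural choice is $\lambda = 1/\sigma$, since then $J+\lambda(T-\sigma J) = \lambda T$, whose inverse is a rescaling of $T^{-1}$ and hence has a continuous single-valued localization precisely when $T^{-1}$ does. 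More carefully: $(\,J + \tfrac1\sigma(T-\sigma J)\,)^{-1} = (\tfrac1\sigma T)^{-1} = \sigma\,T^{-1}(\sigma^{-1}\cdot)$, so a continuous single-valued localization of this resolvent around $J(\ox) + \tfrac1\sigma(\ox^*-\sigma J(\ox)) = \tfrac1\sigma\ox^*$ mapping to $\ox$ is exactly a continuous single-valued localization of $T^{-1}$ around $(\ox^*,\ox)$, after the obvious linear change of variables. Likewise, $T$ being locally strongly monotone with modulus $\sigma$ around $(\ox,\ox^*)$ is, by Remark~\ref{phi}(iv) and Definition~\ref{str-mon}(i), equivalent to $T-\sigma J$ being locally monotone around $(\ox,\ox^*-\sigma J(\ox))$. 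Thus (ii) for $T$ translates into: $T-\sigma J$ is locally monotone around $(\ox,\ox^*-\sigma J(\ox))$ and its resolvent $(J+\tfrac1\sigma(T-\sigma J))^{-1}$ has a continuous single-valued localization at the corresponding point — but Theorem~\ref{theo:loc-Minty}(ii) demands this for \emph{every} $\lambda>0$, not just $\lambda=1/\sigma$.

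This gap — going from the single value $\lambda = 1/\sigma$ to all $\lambda>0$ — is the main obstacle, and I expect it to be the technical heart of the proof. I would resolve it by showing that, in the presence of local monotonicity, a continuous single-valued localization of one resolvent $(J+\lambda_0 T)^{-1}$ forces the same for all $\lambda>0$; this can be extracted from the proof of Theorem~\ref{theo:loc-Minty} itself, since the argument (i)$\Rightarrow$(ii) there produces the continuous single-valued localization of $(J+\lambda T)^{-1}$ for \emph{every} $\lambda>0$ from a single maximal monotone extension, and conversely (ii)$\Rightarrow$(iii)$\Rightarrow$(i) only uses one value of $\lambda$ to recover type-$(B)$ maximality, which then feeds back into (i)$\Rightarrow$(ii) for all $\lambda$. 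So the clean route is: assume (ii) for $T$ with modulus $\sigma$; translate the inverse-localization statement to the $\lambda = 1/\sigma$ resolvent of $T-\sigma J$; apply Theorem~\ref{theo:loc-Minty}'s chain (ii)$\Rightarrow$(iii)$\Rightarrow$(i) to conclude $T-\sigma J$ is locally maximal monotone of type $(B)$; shift back by $\Phi_{\sigma}$ using Proposition~\ref{prop:P-strP}(i) to obtain (i) for $T$. For the converse, (i) for $T$ gives via $\Phi_{-\sigma}$ that $T-\sigma J$ is locally maximal monotone of type $(B)$, so Theorem~\ref{theo:loc-Minty}(i)$\Rightarrow$(ii) yields continuous single-valued localizations of \emph{all} its resolvents — in particular the $\lambda=1/\sigma$ one — which unwinds to a continuous single-valued localization of $T^{-1}$ around $(\ox^*,\ox)$, together with local strong monotonicity of $T$ from Remark~\ref{phi}. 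Throughout, the only delicate bookkeeping is tracking the base points through $\Phi_{\pm\sigma}$ and the linear rescaling of the resolvent, and checking that the neighborhoods produced by Theorem~\ref{theo:loc-Minty} pull back to genuine neighborhoods under these homeomorphisms, which is routine given Remark~\ref{phi}(v) and the assumed smoothness of the norm.
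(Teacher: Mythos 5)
Your proposal follows essentially the same route as the paper's proof: the equivalence (i)$\Longleftrightarrow$(iii) is obtained by extending the $\sigma$-strongly monotone localization to a globally $\sigma$-strongly maximal monotone operator (Proposition~\ref{lem:extend}(ii)) in one direction and by passing through the type-$(A)$/type-$(B)$ equivalence of Theorem~\ref{theo:loc-Minty} in the other, while (i)$\Longleftrightarrow$(ii) is proved by shearing to $T-\sigma J$ via $\Phi_{\pm\sigma}$ and Proposition~\ref{prop:P-strP} and specializing Theorem~\ref{theo:loc-Minty} to $\lambda=\sigma^{-1}$, where $J+\sigma^{-1}(T-\sigma J)=\sigma^{-1}T$; your resolution of the ``one $\lambda$ versus all $\lambda$'' issue --- that implication (ii)$\Longrightarrow$(iii) of Theorem~\ref{theo:loc-Minty} uses only a single value of $\lambda$ --- is precisely what the paper's argument relies on. Two minor slips worth correcting: $(\sigma^{-1}T)^{-1}=T^{-1}(\sigma\,\cdot)$ rather than $\sigma T^{-1}(\sigma^{-1}\cdot)$ (harmless for the localization claim), and Proposition~\ref{propo:R->P} by itself yields only an \emph{ordinary} maximal monotone extension, so for (iii)$\Longrightarrow$(i) you must use your ``direct argument'' producing a $\sigma$-strongly maximal monotone extension (as the paper does via Proposition~\ref{lem:extend}(ii)), not the type-$(A)\Rightarrow(B)$ proposition alone.
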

\begin{proof} Let us first verify that (i)$\Longrightarrow$(iii), where the given norm $\|\cdot\|$ is G\^ateaux differentiable on $X\setminus\{0\}$.
Supposing that $T$ is locally strongly maximal monotone of type $(B)$ around $(\ox,\ox^*)$ with modulus $\sigma>0$ tells us that $T$ is locally $\sigma$-strongly monotone and also locally maximal monotone of type $(B)$ around this point. We know from Theorem~\ref{theo:loc-Minty} that the latter is equivalent to the local maximal monotonicity of type $(A)$ around $(\ox,\ox^*)$. It follows from the definition that $T$ is in fact locally strongly maximal monotone of type $(A)$ around the reference point.

To prove the reverse implication (iii)$\Longrightarrow$(i), suppose that $T$ is locally strongly maximal monotone of type $(A)$ with modulus $\sigma>0$ relative to a neighborhood $U\times V$ of $(\ox,\ox^*)$. Obviously, the set $\gph T \cap (U\times V)$ is $\sigma$-strongly monotone. Using Proposition~\ref{lem:extend}(ii) under the imposed G\^ateaux smoothness and reflexivity of the space $(X,\|\cdot\|)$, we find a $\sigma$-strongly maximal monotone operator $\bar{T}:X\rightrightarrows X^*$ such that 
\begin{equation}\label{T--Tbar}
\gph T \cap (U\times V)\subset \gph \bar{T}.
\end{equation}
Since $\bar{T}$ is a monotone mapping and $T$ is locally strongly maximal monotone of type $(A)$ relative to $U\times V$, it follows from \eqref{T--Tbar} that $\gph T \cap (U\times V) = \gph \bar{T} \cap (U\times V)$. Therefore, we have a $\sigma$-strongly maximal monotone operator $\bar{T}$ on $X$ satisfying the condition $\gph T \cap (U\times V) = \gph \bar{T} \cap (U\times V)$, which means that $T$ is locally strongly maximal monotone of type $(B)$ with modulus $\sigma$ relative to $U\times V$. This completes the proof of the equivalence (i)$\Longleftrightarrow$(iii).\vspace*{0.03in}

Assuming now that the norm $\|\cdot\|$ satisfies all the geometric properties in \eqref{Troyanski-Asplund}, let us verify that (i)$\Longrightarrow$(ii). Indeed, it follows from Proposition~\ref{prop:P-strP}(ii) that the local strong maximal monotonicity of type $(B)$ in (i) ensures that the shifted operator $T-\sigma J$ is locally maximal monotone of type $(B)$ around $(\ox,\ox^*-\sigma J(\ox))$. Choosing $\lambda := \sigma^{-1}$ in implication (i)$\Longrightarrow$(ii) of Theorem~\ref{theo:loc-Minty} allows us to conclude that the mapping
\begin{equation*}
(\sigma^{-1}T)^{-1} = \big( J+ \sigma^{-1}(T-\sigma J)\big)^{-1}
\end{equation*}
admits a single-valued continuous localization around $(\ox^*/\sigma,\ox)$. Therefore, the inverse mapping $T^{-1}$ has a single-valued continuous localization around $(\ox,\ox^*)$, which justifies assertion (ii).

It remains to show that (ii) yields (i). Using (ii) and the discussions in Remark~\ref{phi} tells us that the shifted operator $T-\sigma J$ is locally monotone around $(\ox,\ox^*-\sigma J(\ox))$. We also have that $(\sigma^{-1}T)^{-1}$ admits a single-valued localization around $(\ox^*/\sigma,\ox)$. This ensures that the mapping
\begin{equation*}
\big(J+ \sigma^{-1}(T-\sigma J)\big)^{-1} =(\sigma^{-1}T)^{-1}
\end{equation*}
admits a single-valued localization around that point as well. Employing finally implication (ii)$\Longrightarrow$(i) in Theorem~\ref{theo:loc-Minty} verifies that $T-\sigma J$ is locally maximal monotone of type $(B)$ around $(\ox,\ox^*-\sigma J(\ox))$. Then assertion (i) follows from 
Proposition~\ref{prop:P-strP}(i), which completes the proof.
\end{proof}\vspace*{-0.03in}

Due to the equivalence of Theorem~\ref{coro:loc-Minty-strong} between the two types of local strong maximal monotonicity, we use the term ``{\em local strong maximal monotonicity}" when the space $X$ is reflexive.\vspace*{0.03in} 

The next proposition, which is a useful consequence of Theorem~\ref{coro:loc-Minty-strong} and some other results above, concerns relationships between local strong monotonicity and local maximal monotonicity of operators and their shifts in Fr\'echet smooth (reflexive and nonreflexive) Banach spaces.\vspace*{-0.05in}

\begin{Proposition}\label{coro:R-strongR}
Let $(X,\|\cdot\|)$ be a Fr\'echet smooth Banach space, and let $T:X\rightrightarrows X^*$ be a set-valued mapping with $(\ox,\ox^*)\in \gph T$. Then for any $\sigma>0$ the following hold:

{\bf(i)} If $X$ is reflexive and $T$ is locally maximal monotone around $(\ox,\ox^*)$, then the mapping $T+\sigma J$ is locally strongly maximal monotone around $(\ox,\ox^*+\sigma J(\ox))$ with modulus $\sigma$.

{\bf (ii)} If $T$ is locally strongly maximal monotone of type $(A)$ around $(\ox,\ox^*)$ with modulus $\sigma$, then $T-\sigma J$ is locally maximal monotone of the same type around $(\ox,\ox^*-\sigma J(\ox))$.
\end{Proposition}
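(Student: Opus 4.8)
The plan is to read off assertion (i) from results already at hand, and to prove assertion (ii) directly by transporting the type-$(A)$ maximality property through the vertical shear $\Phi_{-\sigma}$ of \eqref{eq:shear}. The point is that (ii) cannot simply be reduced to Proposition~\ref{prop:P-strP}(ii), because the passage from type $(B)$ to type $(A)$ of local strong maximal monotonicity goes through Theorem~\ref{coro:loc-Minty-strong} and hence requires reflexivity, whereas in (ii) the space $X$ need not be reflexive.

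For (i): since $X$ is reflexive and $(X,\|\cdot\|)$ is Fr\'echet smooth, Proposition~\ref{prop:P-strP}(i) gives that $T+\sigma J$ is locally strongly maximal monotone of type $(B)$ around $(\ox,\ox^*+\sigma J(\ox))$ with modulus $\sigma$; the equivalence {\rm(i)$\Longleftrightarrow$(iii)} of Theorem~\ref{coro:loc-Minty-strong}, valid in any reflexive space, then upgrades this to type $(A)$ as well, which by the adopted convention is exactly what ``locally strongly maximal monotone'' means in the reflexive setting. So (i) is a two-line argument.

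For (ii): I would fix an open neighborhood $U\times V$ of $(\ox,\ox^*)$ witnessing that $T$ is locally strongly maximal monotone of type $(A)$ with modulus $\sigma$, and show that $\Phi_{-\sigma}(U\times V)$ does the job for $T-\sigma J$ around $\Phi_{-\sigma}(\ox,\ox^*)=(\ox,\ox^*-\sigma J(\ox))$. Fr\'echet smoothness makes $\Phi_{-\sigma}$ a homeomorphism, so $\Phi_{-\sigma}(U\times V)$ is open by Remark~\ref{phi}(v), and Remark~\ref{phi}(iv) applied with the set $W=\gph T$ shows that $\Phi_{-\sigma}(\gph T)=\gph(T-\sigma J)$ is monotone with respect to $\Phi_{-\sigma}(U\times V)$, so $T-\sigma J$ is locally monotone there. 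For the type-$(A)$ maximality clause, I would take any $S'\colon X\tto X^*$ monotone with respect to $\Phi_{-\sigma}(U\times V)$ with $\gph(T-\sigma J)\cap\Phi_{-\sigma}(U\times V)\subset\gph S'\cap\Phi_{-\sigma}(U\times V)$, pick a global monotone witness $\Hat S$ agreeing with $S'$ on $\Phi_{-\sigma}(U\times V)$, and set $S:=\Hat S+\sigma J$. Then $S$ is globally $\sigma$-strongly monotone, hence globally (hence locally) monotone, and applying the bijection $\Phi_{\sigma}$ (using $\Phi_\sigma\circ\Phi_{-\sigma}=\mathrm{id}$ and $\gph(\Hat S+\sigma J)=\Phi_\sigma(\gph\Hat S)$ from \eqref{T-T+J}) converts the above inclusion into $\gph T\cap(U\times V)\subset\gph S\cap(U\times V)$; the type-$(A)$ maximality of $T$ then forces $\gph T\cap(U\times V)=\gph S\cap(U\times V)$, and pushing this equality back through $\Phi_{-\sigma}$ yields $\gph(T-\sigma J)\cap\Phi_{-\sigma}(U\times V)=\gph S'\cap\Phi_{-\sigma}(U\times V)$, which is precisely the required maximality clause for Definition~\ref{defi:local-max-R}(i).

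The computations are nothing more than images of graphs and boxes under the bijections $\Phi_{\pm\sigma}$, so I expect the main obstacle to be bookkeeping rather than mathematics: one must keep straight that ``monotone with respect to $W$'' encodes the existence of a \emph{global} monotone mapping coinciding with the given one on $W$, and therefore that the competitor $S'$ for the maximality of $T-\sigma J$ has to be replaced by its global witness \emph{before} adding $\sigma J$, so that the resulting $S$ is a legitimate competitor for the maximality of $T$. Checking that $\Phi_{-\sigma}(U\times V)$ is open — the only place Fr\'echet, rather than merely G\^ateaux, smoothness is used — is the other small point to keep in mind.
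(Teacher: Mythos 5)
Your proposal is correct and follows essentially the same route as the paper: part (i) is read off from Proposition~\ref{prop:P-strP}(i) together with Theorem~\ref{coro:loc-Minty-strong}, and part (ii) transports the type-$(A)$ maximality clause through the shears $\Phi_{\pm\sigma}$ against the competitor obtained by adding $\sigma J$. Your explicit replacement of the competitor $S'$ by its global monotone witness before forming $S=\Hat S+\sigma J$ is a welcome tightening of a step the paper passes over lightly (it writes ``since $S+\sigma J$ is globally monotone'' for an $S$ that is only monotone with respect to $\Phi_{-\sigma}(U\times V)$), but it does not change the argument.
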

\begin{proof} 
Assertion (i) is a direct consequence of Proposition~\ref{prop:P-strP}(i), and Theorem~\ref{coro:loc-Minty-strong}. To verify  (ii), suppose that $T$ is strongly maximal monotone of type $(A)$ on a neighborhood $U\times V$ of $(\ox,\ox^*)$ with modulus $\sigma$ and then prove that $T-\sigma J$ is  maximal monotone of the same type with respect to the neighborhood 
\begin{eqnarray*}
\Phi_{-\sigma}(U\times V)= \left\{(x,x^*)\in X\times X^* \;\big|\; x\in U,\ x^*+\sigma J (x)\in V\right\}
\end{eqnarray*}
of $(\ox,\ox^*-\sigma J(\ox))$. First we deduce from  the $\sigma$-strong monotonicity of $T$ with respect to $U\times V$ and the discussions in Remark~\ref{phi} that $T-\sigma J$ is monotone with respect to $\Phi_{-\sigma}(U\times V)$. To justify its local maximality of type $(A)$, consider an operator $S:X\rightrightarrows X^*$, which is monotone with respect to $\Phi_{-\sigma}(U\times V)$ and such that $\gph (T-\sigma J)\cap  \Phi_{-\sigma}(U\times V) \subset \gph S \cap \Phi_{-\sigma}(U\times V)$. Taking the image of $\Phi_{\sigma}$ on both sides of this inclusion brings us to 
\begin{eqnarray}\label{eq:qq}
\gph T\cap (U\times V) \subset \gph (S+\sigma J) \cap (U\times V). 
\end{eqnarray}
Since $S+\sigma J$ is globally monotone and $T$ is maximal monotone with respect to $U\times V$, we get from \eqref{eq:qq} that $\gph T\cap (U\times V) = \gph (S+\sigma J)\cap (U\times V)$. Taking now the image of $\Phi_{-\sigma}$ on both sides of the latter equality yields 
$$
\gph (T-\sigma J) \cap \Phi_{-\sigma}(U\times V) = \gph S \cap \Phi_{-\sigma}(U\times V),
$$ 
which therefore completes the proof of the proposition.
\end{proof}\vspace*{-0.03in}

Finally in this section, we present a consequence of Proposition~\ref{coro:R-strongR} on the $\sigma$-strong local maximal monotonicity of operators without involving the shift mappings in the formulation while using them in the proof. This corollary is a local counterpart to Proposition~\ref{strong-alter-glo}.\vspace*{-0.05in}

\begin{Corollary}\label{coro:alter-R}
Let $(X,\|\cdot\|)$ be a reflexive and Fr\'echet smooth Banach space, and let $T:X\tto X^*$ with $(\ox,\ox^*)\in \gph T$. Then for any $\sigma>0$ we have that $T$ is locally strongly maximal monotone around $(\ox,\ox^*)$ with modulus $\sigma$ if and only if there is a neighborhood $U\times V$ of $(\ox,\ox^*)$ such that $T$ is $\sigma$-strongly monotone with respect to $U\times V$, and that 
\begin{equation}\label{R-equ}
\gph T\cap (U\times V) = \gph S\cap (U\times V)
\end{equation}
for any $\sigma$-strongly monotone mapping $S:X\rightrightarrows X^*$ on $U\times V$ satisfying the inclusion $\gph T\cap (U\times V)\subset \gph S \cap (U\times V)$.
\end{Corollary}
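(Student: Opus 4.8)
The plan is to run, in the local setting, the same reduction that powers the proof of Proposition~\ref{strong-alter-glo}: pass to the shifted operator $T-\sigma J$, and transport everything back and forth through the vertical shear $\Phi_{\pm\sigma}$ of \eqref{eq:shear} in place of the algebraic shift, invoking the local preservation result of Proposition~\ref{coro:R-strongR}(i) where the global argument used Proposition~\ref{prop:preser}. The ``only if'' implication is immediate: if $T$ is locally strongly maximal monotone around $(\ox,\ox^*)$ with modulus $\sigma$, then by Theorem~\ref{coro:loc-Minty-strong} it is locally strongly maximal monotone of type~$(A)$, so there is a neighborhood $U\times V$ of $(\ox,\ox^*)$ relative to which $T$ is $\sigma$-strongly monotone and $\gph T\cap(U\times V)=\gph S\cap(U\times V)$ holds for \emph{every} operator $S$ that is merely monotone with respect to $U\times V$ and dominates $T$ there; restricting the competitors $S$ to the narrower class of $\sigma$-strongly monotone operators only weakens this requirement, so the displayed condition follows.

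For the ``if'' implication, assume the displayed condition for some neighborhood $U\times V$ of $(\ox,\ox^*)$ and put $W:=\Phi_{-\sigma}(U\times V)$; by Remark~\ref{phi}(v) and the Fr\'echet smoothness of $\|\cdot\|$ this is an open neighborhood of $\Phi_{-\sigma}(\ox,\ox^*)=(\ox,\ox^*-\sigma J(\ox))$. First, since $T$ is $\sigma$-strongly monotone with respect to $U\times V$, Remark~\ref{phi}(iv) gives that $T-\sigma J$ is monotone with respect to $W$. To see that $T-\sigma J$ is actually locally maximal monotone of type~$(A)$ relative to $W$, take any $S\colon X\rightrightarrows X^*$ that is monotone with respect to $W$ and satisfies $\gph(T-\sigma J)\cap W\subset\gph S\cap W$; applying the bijection $\Phi_{\sigma}$, using \eqref{T-T+J} and the identity $\Phi_{\sigma}(W)=U\times V$, I would obtain $\gph T\cap(U\times V)\subset\gph(S+\sigma J)\cap(U\times V)$, and by Remark~\ref{phi}(ii) the operator $S+\sigma J$ is $\sigma$-strongly monotone with respect to $U\times V$. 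Hence the hypothesis \eqref{R-equ} applies to $S+\sigma J$ and yields $\gph T\cap(U\times V)=\gph(S+\sigma J)\cap(U\times V)$; pushing this equality back through $\Phi_{-\sigma}$ gives $\gph(T-\sigma J)\cap W=\gph S\cap W$. Thus $T-\sigma J$ is locally maximal monotone of type~$(A)$, hence (by Proposition~\ref{propo:R->P} and Theorem~\ref{theo:loc-Minty}, in the reflexive setting) locally maximal monotone, around $(\ox,\ox^*-\sigma J(\ox))$. Finally, Proposition~\ref{coro:R-strongR}(i) applied to the operator $T-\sigma J$ at the point $(\ox,\ox^*-\sigma J(\ox))$ shows that $(T-\sigma J)+\sigma J=T$ is locally strongly maximal monotone around $(\ox,\ox^*)$ with modulus~$\sigma$, which completes the proof.

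The step I expect to demand the most care is the bookkeeping of the ``monotone / $\sigma$-strongly monotone with respect to a neighborhood'' properties under the shears $\Phi_{\pm\sigma}$: one must make sure that after applying $\Phi_{\sigma}$ the competitor $S+\sigma J$ lands \emph{precisely} in the class of $\sigma$-strongly monotone operators relative to $U\times V$ to which the corollary's hypothesis grants the equality \eqref{R-equ}, and conversely that $T-\sigma J$ inherits monotonicity relative to $W=\Phi_{-\sigma}(U\times V)$. These are exactly the transfer facts recorded in Remark~\ref{phi}(ii),(iv), so once they are invoked the argument closes with no further computation. It also remains only to observe that all the cited results --- Theorem~\ref{theo:loc-Minty}, Theorem~\ref{coro:loc-Minty-strong}, Proposition~\ref{coro:R-strongR}(i), and Proposition~\ref{propo:R->P} --- are available under the standing hypotheses that $X$ is reflexive and $\|\cdot\|$ is Fr\'echet smooth.
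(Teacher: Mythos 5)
Your proposal is correct and follows essentially the same route as the paper: the ``only if'' direction via the type-$(A)$ characterization, and the ``if'' direction by transporting the hypothesis through the vertical shear $\Phi_{\pm\sigma}$ to show that $T-\sigma J$ is locally maximal monotone of type $(A)$ relative to $\Phi_{-\sigma}(U\times V)$, then concluding with Proposition~\ref{coro:R-strongR}(i). The only difference is that you spell out the shear-transfer bookkeeping that the paper compresses into a single sentence.
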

\begin{proof}
The ``only if" part is trivial. To verify  the ``if" part, assume the existence of a neighborhood $U\times V$ of $(\ox,\ox^*)$ such that $T$ is $\sigma$-strongly monotone with respect to $U\times V$ while satisfying the equality in \eqref{R-equ} for any $\sigma$-strongly monotone operator $S:X\tto X^*$ on $U\times V$ with $\gph T\cap (U\times V)\subset \gph S \cap (U\times V)$. We intend to check that $T$ is $\sigma$-strongly maximal monotone of type $(A)$ with respect to $U\times V$, which gives us the claimed result due to the equivalence in Theorem~\ref{coro:loc-Minty-strong}. Indeed, utilizing the vertical shear transformation $\Phi_{-\sigma}$ tells us that the shifted operator $T-\sigma J$ is maximal monotone of type $(A)$ with respect to the set $\Phi_{-\sigma}(U\times V)$, which is a neighborhood of $(\ox,\ox^*-\sigma J(\ox))$. Then it follows from Proposition~\ref{coro:R-strongR}(i) that $T$ is locally strongly maximal monotone of type $(A)$ around $(\ox,\ox^*)$, and we are done with the proof.
\end{proof}\vspace*{-0.2in}

\section{Generalized Differentiation}\label{sec:gen-diff}\vspace*{-0.05in}

Our further plans in this paper include the usage of appropriate {\em generalized differential tools} of variational analysis to {\em characterize} local maximal monotonicity of set-valued mappings. This will be done in Section~\ref{sec:coderi-localmax} in the framework of Hilbert spaces. In what follows, all the spaces under consideration are assumed to be {\em Hilbert} without special mentioning. 

This section overviews the constructions and preliminaries of generalized differentiation in Hilbert spaces that are needed for local maximal monotonicity characterizations in Section~\ref{sec:coderi-localmax}. We refer the reader to \cite{Mordukhovich06} for the corresponding extensions and further material in more generality, and to \cite{Mordukhovich18,Rockafellar98} for rich variational theory and applications in finite dimensions.\vspace*{0.03in}

Our main generalized differential characterizations of local maximal monotonicity and its strong counterparts are obtained in terms of {\em coderivatives} of set-valued mappings. To begin with, we define the (Fr\'echet) {\em regular normal cone} to a set $\O\subset X$ at $\ox\in\O$  by
\begin{equation}\label{rnc}
\Hat{N}(x;\Omega):=\Big\{x^*\in X\;\Big|\;\limsup_{u\overset{\Omega}{\rightarrow}x}\frac{\langle
x^*,u-x\rangle}{\|u-x\|}\le 0\Big\}. 
\end{equation} 
Among various generalized differential constructions for mappings, in this paper we use the following two coderivatives. Given a set-valued mapping $F\colon X\tto Y$ between Hilbert spaces, the {\em regular coderivative} of $F$ at $(\ox,\oy)\in\gph F$ is defined by 
\begin{equation}\label{reg-cod} 
\Hat
D^*F(\ox,\oy)(y^*):=\big\{x^*\in X\;\big|\;(x^*,-y^*)\in\Hat N((\ox,\oy);\gph F)\big\},\quad y^*\in Y,
\end{equation}
via the regular normal cone \eqref{rnc} to the graph of $F$. 
The other coderivative construction for $F$ at $(\ox,\oy)\in\gph F$, known as the (Mordukhovich) {\em mixed coderivative}, is given by: 
\begin{equation}\label{mix-lim-cod}
\begin{array}{ll}
D^*_M F(\ox,\oy)(\bar{y}^*):=\Big\{x^*\in X\;\Big|&\exists\,
(x_k,y_k,y_k^*)\rightarrow(\bar{x},\bar{y},\bar{y}^*),\;x_k^*\xrightarrow{w}\bar{x}^*\;\mbox{ as }\;k\to\infty\\
&\mbox{with }\;(x_k,y_k)\in \gph F,\;x_k^*\in \widehat{D}^*F(x_k,y_k)(y_k^*)\Big\},\quad\oy^*\in Y.
\end{array}
\end{equation}
Observe that the limiting construction in \eqref{mix-lim-cod} employs the {\em mixed} convergence of $(x^*_k,y^*_k)$: weak for $x^*_k$ and strong for $y^*_k$. This differs \eqref{mix-lim-cod} from the {\em normal coderivative} $D^*_N F(\ox,\oy)(\bar{y}^*)$ in infinite dimensions (which is not used in this paper), where both $x^*_k$ and $y^*_k$ converge weakly on $X\times Y$. Despite the nonconvexity of $D^*_M F(\ox,\oy)(\bar{y}^*)$ and $D^*_N F(\ox,\oy)(\bar{y}^*)$, both coderivatives enjoy {\em full calculus} based on the fundamental {\em extremal principle} of variational analysis. They both play a prominent role in variational analysis and numerous applications; see \cite{Mordukhovich06}. Note that when $F$ is single-valued and continuously differentiable around $\ox$, then we have
\begin{equation*} \Hat{D}^*F(\bar{x})(y^*)=D^*_M F(\bar{x})(y^*)=D^*_N F(\bar{x})(y^*)=\big\{\nabla F(\bar{x})^*y^*\big\},\quad y^*\in Y,
\end{equation*} 
where $\nabla F(\bar{x})^*$ stands for the adjoint derivative operator, and where $\oy=F(\ox)$ is omitted.\vspace*{0.03in}

To conclude this section, we formulate the following simple observation used in Section~\ref{sec:coderi-localmax}.\vspace*{-0.05in}

\begin{Proposition}\label{prop:GF}
Let $F,G:X\rightrightarrows Y$ be set-valued mappings with a common point $(\ox,\oy)$ of their graphs. Assume that there exists a neighborhood $W$ of $(\ox,\oy)$ such that
\begin{equation*}
\gph F\cap W=\gph G\cap W.
\end{equation*}
Then for all $y^*\in Y$ we have the equalities
\begin{equation*}
\widehat{D}^*F(\ox,\oy)(y^*) = \widehat{D}^* G(\ox,\oy)(y^*)\;\mbox{ and }\;D^*_M F(\ox,\oy)(y^*) = D^*_M G(\ox,\oy)(y^*).
\end{equation*}
\end{Proposition}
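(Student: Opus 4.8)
The plan is to exploit the fact that both coderivative constructions are defined purely in terms of the local structure of the graph near the reference point, so a local coincidence of graphs should force coincidence of the coderivatives. First I would handle the regular coderivative $\widehat{D}^*F(\ox,\oy)$. By definition \eqref{reg-cod}, $x^*\in\widehat{D}^*F(\ox,\oy)(y^*)$ if and only if $(x^*,-y^*)\in\widehat{N}((\ox,\oy);\gph F)$, so it suffices to show $\widehat{N}((\ox,\oy);\gph F)=\widehat{N}((\ox,\oy);\gph G)$. The regular normal cone in \eqref{rnc} is a $\limsup$ over $u\overset{\Omega}{\to}x$, and for $x=(\ox,\oy)$ this limsup only involves points $u$ arbitrarily close to $(\ox,\oy)$; once $u$ lies in the neighborhood $W$ we have $u\in\gph F\iff u\in\gph G$. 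Hence the limsup defining $\widehat{N}((\ox,\oy);\gph F)$ equals the one for $\gph G$, giving the first equality. (More abstractly, one can cite that $\widehat{N}(\cdot;\Omega)$ at a point $\bar z$ depends only on $\Omega$ intersected with any neighborhood of $\bar z$; the paper's excerpt does not state this lemma explicitly, so I would include the one-line limsup argument.)

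Next I would treat the mixed coderivative. The construction \eqref{mix-lim-cod} builds $D^*_MF(\ox,\oy)(\oy^*)$ as a sequential limit: $x^*$ lies in it iff there are sequences $(x_k,y_k,y_k^*)\to(\ox,\oy,\oy^*)$, $x_k^*\xrightarrow{w}x^*$, with $(x_k,y_k)\in\gph F$ and $x_k^*\in\widehat{D}^*F(x_k,y_k)(y_k^*)$. Since $(x_k,y_k)\to(\ox,\oy)$, for all large $k$ the pair $(x_k,y_k)$ lies in the open set $W$, so on one hand $(x_k,y_k)\in\gph F\cap W=\gph G\cap W\subset\gph G$, and on the other hand, applying the already-established equality of \emph{regular} coderivatives at the nearby point $(x_k,y_k)$ — which is legitimate because $W$ is also a neighborhood of $(x_k,y_k)$ once $k$ is large, so $\gph F$ and $\gph G$ coincide on a neighborhood of $(x_k,y_k)$ as well — we get $\widehat{D}^*F(x_k,y_k)(y_k^*)=\widehat{D}^*G(x_k,y_k)(y_k^*)$. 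Truncating the sequences to the tail where $(x_k,y_k)\in W$ does not affect the limits, so the exact same sequences witness $x^*\in D^*_MG(\ox,\oy)(\oy^*)$. This shows $D^*_MF(\ox,\oy)(\oy^*)\subset D^*_MG(\ox,\oy)(\oy^*)$, and the reverse inclusion follows by symmetry (the hypothesis $\gph F\cap W=\gph G\cap W$ is symmetric in $F$ and $G$).

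The only mildly delicate point — and the one I would be most careful about — is the bookkeeping in the mixed-coderivative step: one must observe that $W$, being a neighborhood of $(\ox,\oy)$, contains a whole neighborhood of each $(x_k,y_k)$ for $k$ large, so that the graphs of $F$ and $G$ agree not just at $(x_k,y_k)$ but on a neighborhood of it, which is exactly what is needed to invoke the regular-coderivative equality at $(x_k,y_k)$. After that, passing from a sequence for $F$ to a sequence for $G$ is just discarding finitely many initial terms, which changes no weak or strong limit. Everything else is a direct unwinding of the definitions, so I would keep the write-up short: establish the normal-cone equality via the limsup, deduce the regular-coderivative equality, then run the truncation argument for the mixed coderivative and invoke symmetry.
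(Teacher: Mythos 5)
Your proof is correct and follows essentially the same route as the paper: the regular-coderivative equality via the locality of the regular normal cone (the paper writes this as $\widehat{N}((\ox,\oy);\gph F)=\widehat{N}((\ox,\oy);\gph F\cap W)=\widehat{N}((\ox,\oy);\gph G\cap W)=\widehat{N}((\ox,\oy);\gph G)$), and the mixed-coderivative equality by unwinding the sequential definition. The only difference is that the paper dismisses the second equality with ``follows from definition,'' whereas you correctly supply the tail-truncation argument and the observation that $W$ is also a neighborhood of the nearby points $(x_k,y_k)$ — a detail worth having.
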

\begin{proof} It suffices to check the first equality while the second one follows from definition \eqref{mix-lim-cod}. To proceed, pick $x^*\in \widehat{D}^*F(\ox,\oy)(y^*)$, i.e., $(y^*,x^*) \in\gph\widehat{D}^*F(\ox,\oy)$. We get from \eqref{reg-cod} that
\begin{eqnarray*}
(x^*,-y^*) \in\widehat{N}\big((\ox,\oy);\gph F\big)=\widehat{N}\big((\ox,\oy);\gph F\cap W\big)&=&\widehat{N}\big((\ox,\oy);\gph G\cap W\big)\\
&=&\widehat{N}\big((\ox,\oy);\gph G\big),
\end{eqnarray*}
where the first and last equalities are due to definition \eqref{rnc} and $(\ox,\oy)\in W$, where the neighborhood $W$ is an open set. This gives us $(x^*,-y^*)\in\widehat{N}((\ox,\oy);\gph G)$, which means that $(y^*,x^*)\in \gph\widehat{D}^*G(\ox,\oy)$ as claimed.
\end{proof}\vspace*{-0.15in}

\section{Coderivative Criteria for Local Maximal Monotonicity}\label{sec:coderi-localmax}\vspace*{-0.05in}

The main results of this section provide {\em necessary and sufficient conditions} for {\em local maximal monotonicity} of set-valued mappings on Hilbert spaces obtained in terms of their regular and mixed {\em coderivatives}. As a consequence, we derive also coderivative characterizations of {\em local strong} maximal monotonicity. The resolvent (Minty-type) characterizations of local maximal monotonicity obtained in Theorem~\ref{theo:loc-Minty}, being married to {\em coderivative calculus} and criteria for {\em Lipschitzian stability}, plays a crucial role in our device.\vspace*{0.03in}

In our characterizations below, we use certain hypomonotonicity properties of set-valued mappings, which are recalled now for mappings in Hilbert spaces. It is said that $T\colon X\tto X$ is (globally) {\em hypomonotone} with modulus $r>0$ if the mapping $T+r I$ is monotone on $X$, i.e.,
\begin{equation*}
\la x^*_1-x^*_2,x_1-x_2\ra\ge-r\|x_1-x_2\|^2\;\mbox{ for all }\;(x_1,x^*_1),(x_2,x^*_2)\in X\times X.  
\end{equation*}
The {\em local hypomonotonicity} of $T$ around a point $(\ox,\ox^*)\in\gph T$ with modulus $r>0$ is defined via the existence of a neighborhood $U\times V$ of $(\ox,\ox^*)$ and a globally hypomonotone operator $\bar T\colon X\tto X$ with modulus $r>0$ such that
\begin{equation*}
\gph T\cap(U\times V)=\gph\bar T\cap(U\times V).
\end{equation*}
It has been realized in variational analysis that the hypomonotonicity properties hold for broad classes of mappings that appear in various problems of optimization, control, and numerous applications. The reader is referred to \cite{MordukhovichNghia1,Pen02,Rockafellar98} and the bibliographies therein for more details.\vspace*{0.03in}

We begin with formulating the following characterization of {\em global} maximal monotonicity, which is taken from \cite[Theorem~3.7]{CBN} and is used in the proof of the main result below.\vspace*{-0.05in}

\begin{Lemma}\label{prop:mixed} Let $T:X\rightrightarrows X$ be a set-valued mapping with closed graph. Then we have the equivalent assertions:

{\bf(i)} $T$ is maximal monotone on $X$.

{\bf(ii)} $T$ is hypomonotone on $X$ with some modulus $r>0$, and the mixed coderivative $D^*_M T(u,v)$ is positive-semidefinite on $X$ meaning that
\begin{eqnarray*}
\la z,w \ra \ge 0\;\text{ for any }\;z\in D^*_M T(u,v)(w),\;(u,v)\in\gph T,\;\mbox{ and }\;w\in X.
\end{eqnarray*}
\end{Lemma}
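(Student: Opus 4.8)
The plan is to prove the two implications separately, leaning on Minty's theorem (already used in the paper via \cite{Rockafellar70}), on the coderivative sum rule, and on the Mordukhovich criterion for metric regularity from \cite{Mordukhovich06}.

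\textbf{Implication (i)$\Longrightarrow$(ii).} Hypomonotonicity is immediate: a maximal monotone operator is monotone, hence hypomonotone with any modulus $r>0$. For the coderivative part I would use the Minty parametrization of $\gph T$. Since $T$ is maximal monotone on the Hilbert space $X$, the resolvent $J:=(I+T)^{-1}$ is single-valued, full-domain, and firmly nonexpansive, so the reflected resolvent $R:=2J-I$ is nonexpansive. Consider the self-adjoint linear isomorphism $\Phi(x,x^*):=(x+x^*,x)$, which carries $\gph T$ onto $\gph J$. Transforming regular normals under $\Phi$ (with $\Phi^*=\Phi$) converts $z\in\widehat D^*T(u,v)(w)$ into $-w\in\widehat D^*J(u+v,u)(-(z+w))$. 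Writing $J=\tfrac12 I+\tfrac12 R$ and applying the exact sum rule for the smooth summand together with the norm bound $\|\widehat D^*R\|\le 1$ coming from nonexpansiveness, I obtain $\|z-w\|\le\|z+w\|$, which is exactly $\la z,w\ra\ge 0$. Passing to the mixed-coderivative limit preserves this: for a (bounded) weakly convergent $z_k\xrightarrow{w}z$ and a strongly convergent $w_k\to w$ one has $\la z_k,w_k\ra\to\la z,w\ra$, so positive-semidefiniteness of $D^*_M T$ follows. This weak--strong pairing is precisely why the \emph{mixed} coderivative is the correct object here.

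\textbf{Implication (ii)$\Longrightarrow$(i), reductions.} Set $S:=T+rI$, which is monotone with closed graph by hypomonotonicity, and note via the sum rule that $D^*_M S(u,v+ru)(w)=rw+D^*_M T(u,v)(w)$, whence $\la z',w\ra\ge r\|w\|^2$ for every $z'\in D^*_M S(\cdot)(w)$. I would first show $S$ is maximal monotone through Minty. The operator $I+S$ is $1$-strongly monotone, so it has a single-valued $1$-Lipschitz inverse and, by closedness of $\gph S$, its range $\rge(I+S)$ is closed. On the other hand, $0\in D^*_M(I+S)(\cdot)(w)$ forces $-w\in D^*_M S(\cdot)(w)$ and hence $-\|w\|^2\ge 0$, i.e.\ $w=0$; the Mordukhovich criterion then gives metric regularity of $I+S$ at every graph point, so $\rge(I+S)$ is open. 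Being a nonempty clopen subset of the connected space $X$, it equals $X$, and Minty's theorem yields maximal monotonicity of $S$.

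\textbf{Implication (ii)$\Longrightarrow$(i), core and obstacle.} With $S$ maximal monotone, each resolvent $J_\lambda:=(I+\lambda S)^{-1}$, $\lambda>0$, is single-valued and defined on all of $X$. The plan is to integrate the pointwise bound $\la z',w\ra\ge r\|w\|^2$ up to \emph{global} strong monotonicity: translating the coderivative of $S$ through the shear-and-invert correspondence into that of $J_\lambda$ turns this bound into $\|D^*_M J_\lambda(\cdot)\|\le(1+\lambda r)^{-1}$ at every point, and for a single-valued, full-domain, locally Lipschitz map on the connected space $X$ the global Lipschitz modulus equals the supremum of these coderivative norms. Hence $J_\lambda$ is a $(1+\lambda r)^{-1}$-contraction, equivalently $S$ is $r$-strongly monotone, so $T=S-rI$ is monotone; then $S$ is $r$-strongly maximal monotone, and Proposition~\ref{prop:preser}(i) (with $J=I$ in the Hilbert setting) gives that $T=S-rI$ is maximal monotone. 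The hard part is exactly this integration step: upgrading the purely infinitesimal, \emph{threshold} positivity (semidefiniteness, not strict positivity) to the global monotonicity inequality genuinely requires both the global Lipschitz manifold structure of $\gph S$ furnished by maximal monotonicity and a coderivative mean-value inequality to convert local modulus estimates into a global contraction constant; everything else is a soft consequence of Minty's theorem, the Mordukhovich criterion, and the coderivative sum rule.
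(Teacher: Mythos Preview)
The paper does not prove this lemma at all: it is quoted verbatim from \cite[Theorem~3.7]{CBN} and used as a black box in the proof of Theorem~\ref{theo:main}. So there is no ``paper's own proof'' to compare against; what you have written is an independent reconstruction of the result of Chieu--Lee--Mordukhovich--Nghia.

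Your argument is essentially correct, and the (i)$\Rightarrow$(ii) direction via the Minty parametrization and the Cayley transform $R=2J-I$ is clean. For (ii)$\Rightarrow$(i) the structure is right, but two infinite-dimensional points deserve tightening. First, the phrase ``the Mordukhovich criterion then gives metric regularity'' is imprecise: the pointwise kernel condition $\ker D^*_M(I+S)(\cdot)=\{0\}$ is \emph{not} sufficient in Hilbert spaces without a compactness hypothesis such as PSNC. What you actually have is the quantitative neighborhood bound $\|w\|\le(1+r)^{-1}\|p\|$ for every $p\in\widehat D^*(I+S)(\cdot)(w)$, and it is this uniform estimate on the \emph{regular} coderivative that feeds into \cite[Theorem~4.7]{Mordukhovich06} (the same device used in the proof of Theorem~\ref{theo:main}) to produce the Lipschitz-like property of $(I+S)^{-1}$ and hence openness of $\rge(I+S)$. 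Second, the ``integration step'' you flag as hard is in fact soft once phrased correctly: the same regular-coderivative bound gives, again via \cite[Theorem~4.7]{Mordukhovich06}, that $J_\lambda$ is locally Lipschitz with modulus $(1+\lambda r)^{-1}$ at \emph{every} point of its domain $X$; since $X$ is convex, a uniform local Lipschitz modulus is automatically global. No separate mean-value theorem is needed. With these two clarifications your proof goes through, and the detour through $S=T+rI$ could even be shortened by working directly with $(I+\lambda T)^{-1}$ for small $\lambda$, mirroring what the paper does locally in Theorem~\ref{theo:main}.
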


To derive coderivative characterizations of {\em local} maximal monotonicity, we employ a novel {\em vertical transvection} technique of its own interest. Taking any real number $\sigma\neq 0$, define the transformation $\Delta_{\sigma}:X\times X\rightarrow X\times X$ by 
\begin{equation}\label{shear+reflect}
\Delta_{\sigma}(x,v) := (v+\sigma x,x)\;\mbox{ for any }\;(x,v)\in X\times X.
\end{equation}
Observe that $\Delta_{\sigma} = \Theta \circ \Phi_\sigma$ is the composition of the vertical shear mapping $\Phi_{\sigma}$ from \eqref{eq:shear} and the reflection $\Theta$ over the diagonal $\{(u,u)\in X\times X\mid u\in X\}$ in the scheme
\begin{equation*}
(x,v)\overset{\Phi_{\sigma}}{\mapsto} (x,v+\sigma x) \overset{\Theta}{\mapsto} (v+\sigma x,x).
\end{equation*}
It is easy to check the following properties of $\Delta_{\sigma}$:\\[1ex]
{\bf(a)} $\Delta_{\sigma}$ is a homeomorphism from $X\times X$ to itself as the composition of two homeomorphisms.\\[1ex]
{\bf(b)} The inverse map $\Delta_{\sigma}^{-1} = \Phi_{\sigma}^{-1}\circ \Theta^{-1} = \Phi_{-\sigma}\circ \Theta$ is specified as
\begin{equation}\label{Delta-inv}
(v,x) \overset{\Theta}{\mapsto} (x,v) \overset{\Phi_{-\sigma}}{\mapsto} (x,v-\sigma x).
\end{equation}
{\bf(c)} $\Delta_\sigma$ maps the graph of any multifunction $T:X\rightrightarrows X$ into the graph of $(T+\sigma I)^{-1}$, i.e.,
\begin{equation}\label{Delta}
\Delta_\sigma (\gph T)=\gph (T+\sigma I)^{-1},
\end{equation}
while $\Delta_{\sigma}^{-1}\big(\gph (T+\sigma I)^{-1}\big) = \gph T$.\vspace*{0.05in}

Utilizing the transformations $\Delta_{\sigma}$ and its inverse, we get the following proposition.\vspace*{-0.05in}

\begin{Proposition}\label{transform} Let $T:X\rightrightarrows X$ be a set-valued mapping with $(\ox,\ov)\in \gph T$ and such that $\gph T$ is locally closed around $(\ox,\ov)$. Suppose that $T$ is locally hypomonotone around $(\ox,\ov)$ with modulus $r>0$. Then for all $\sigma >r$, the mapping $(T+\sigma I)^{-1}$ has a localization $R_{\sigma}$ around $(\ov+\sigma \ox,\ox)$, which is single-valued on its domain satisfying the implication
\begin{equation} \label{coderivative}
\big[w\in\widehat{D}^*R_\sigma (v)(z) \Longrightarrow -z + \sigma w \in \widehat{D}^*T(R_\sigma(v),v -\sigma R_\sigma (v))(-w)\big] \ \text{ for all }\ v\in \dom R_{\sigma}.
\end{equation} 
\end{Proposition}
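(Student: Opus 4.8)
The plan is to use the homeomorphism $\Delta_\sigma$ from \eqref{shear+reflect} to transfer the local hypomonotonicity of $T$ into global maximal monotonicity of an auxiliary operator, then read off the single-valuedness of the localization from Minty's theorem, and finally obtain the coderivative implication \eqref{coderivative} by a direct chain-rule computation through the (invertible, affine) map $\Delta_\sigma$.

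\textbf{Step 1: constructing a maximal monotone representative.} Since $T$ is locally hypomonotone with modulus $r$ around $(\ox,\ov)$, there are a neighborhood $U\times V$ of $(\ox,\ov)$ and a globally hypomonotone operator $\bar T$ with the same modulus $r$ such that $\gph T\cap(U\times V)=\gph\bar T\cap(U\times V)$. Fix $\sigma>r$. Then $\bar T+\sigma I=(\bar T+rI)+(\sigma-r)I$ is globally monotone (indeed $(\sigma-r)$-strongly monotone), and since $\gph\bar T$ may be taken locally closed near $(\ox,\ov)$, one can in addition arrange (shrinking $U\times V$ and using Proposition~\ref{lem:extend}(i) applied to the restricted graph, followed by the shift argument as in the proofs of Proposition~\ref{prop:preser}) that $\bar T+\sigma I$ is \emph{maximal} monotone on $X$ with $\gph\bar T\cap(U\times V)=\gph T\cap(U\times V)$. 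By Minty's theorem in Hilbert space, $(\bar T+\sigma I)^{-1}$ is single-valued and (Lipschitz) continuous on all of $X$. Now apply $\Delta_\sigma$: by property \eqref{Delta}, $\Delta_\sigma(\gph\bar T)=\gph(\bar T+\sigma I)^{-1}$, and since $\Delta_\sigma$ is a homeomorphism carrying the neighborhood $U\times V$ of $(\ox,\ov)$ onto a neighborhood $\widetilde W$ of $\Delta_\sigma(\ox,\ov)=(\ov+\sigma\ox,\ox)$, we get
\[
\gph(\bar T+\sigma I)^{-1}\cap\widetilde W=\gph(T+\sigma I)^{-1}\cap\widetilde W .
\]
Define $R_\sigma$ to be the graphical localization of $(T+\sigma I)^{-1}$ with graph $\gph(T+\sigma I)^{-1}\cap\widetilde W$; since it coincides near the reference point with the single-valued continuous map $(\bar T+\sigma I)^{-1}$, it is single-valued on its domain, with $\ox=R_\sigma(\ov+\sigma\ox)$.

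\textbf{Step 2: the coderivative transfer.} Fix $v\in\dom R_\sigma$ and write $x:=R_\sigma(v)\in X$, so $(v,x)\in\gph R_\sigma\subset\gph(T+\sigma I)^{-1}$, equivalently $v-\sigma x\in T(x)$, i.e. $(x,v-\sigma x)\in\gph T$. By Proposition~\ref{prop:GF}, since $\gph R_\sigma$ agrees with $\gph(T+\sigma I)^{-1}$ on the open set $\widetilde W$, we have $\widehat D^*R_\sigma(v)(z)=\widehat D^*\big[(T+\sigma I)^{-1}\big](v,x)(z)$ for all $z$; likewise the regular normal cones to $\gph T$ and to $\gph\bar T$ at $(x,v-\sigma x)$ coincide. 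The core computation is then purely linear: $\Delta_\sigma$ is the invertible affine map $(x,w)\mapsto(w+\sigma x,x)$ with transpose / adjoint easy to write down, and $\gph(T+\sigma I)^{-1}=\Delta_\sigma(\gph T)$. Thus the regular normal cone to $\gph(T+\sigma I)^{-1}$ at $(v,x)=\Delta_\sigma(x,v-\sigma x)$ is the image of $\widehat N((x,v-\sigma x);\gph T)$ under the inverse adjoint of $\Delta_\sigma$ (a homeomorphism preserves the regular normal cone up to the relevant linear isomorphism — this is the change-of-variables rule \eqref{rnc} obeys under diffeomorphisms). Unwinding the definition \eqref{reg-cod}: $w\in\widehat D^*R_\sigma(v)(z)$ means $(w,-z)\in\widehat N((v,x);\gph(T+\sigma I)^{-1})$; pulling back through $\Delta_\sigma$ (whose derivative is the matrix $\left[\begin{smallmatrix}\sigma I & I\\ I & 0\end{smallmatrix}\right]$, adjoint $\left[\begin{smallmatrix}\sigma I & I\\ I & 0\end{smallmatrix}\right]$) converts the pair $(w,-z)$ into the pair $(\sigma w - z,\, w)$ acting on $\gph T$, so $(\sigma w-z,\,w)\in\widehat N((x,v-\sigma x);\gph T)$. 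Comparing with \eqref{reg-cod} for $T$, namely $(x^*,-y^*)\in\widehat N((x,v-\sigma x);\gph T)\Leftrightarrow x^*\in\widehat D^*T(x,v-\sigma x)(y^*)$, and reading $x^*=-z+\sigma w$, $y^*=-w$, yields exactly $-z+\sigma w\in\widehat D^*T(R_\sigma(v),v-\sigma R_\sigma(v))(-w)$, which is \eqref{coderivative}.

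\textbf{Anticipated main obstacle.} The delicate point is Step~1: producing a \emph{globally maximal monotone} operator agreeing with $T$ on a full neighborhood while keeping $\sigma>r$ fixed. Local hypomonotonicity as defined only gives a globally hypomonotone $\bar T$, and one must combine the shift $\bar T+\sigma I$ (strongly monotone, by $\sigma>r$), the local closedness of $\gph T$, and a maximal-monotone extension argument (Proposition~\ref{lem:extend}(i) on the truncated graph, then Proposition~\ref{prop:preser}) without enlarging the graph inside the neighborhood — this mirrors the technical care already exercised in the proof of Theorem~\ref{theo:loc-Minty}. The coderivative bookkeeping in Step~2 is routine once one is careful that $\Delta_\sigma$ is an isomorphism (so the regular normal cone transforms covariantly via the adjoint of its linear part) and that $v-\sigma R_\sigma(v)\in T(R_\sigma(v))$ places the base point correctly in $\gph T$; the only genuine subtlety there is invoking Proposition~\ref{prop:GF} to replace $R_\sigma$ by $(T+\sigma I)^{-1}$ before applying the change of variables.
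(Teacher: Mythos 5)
Your Step 2 is sound: pushing the regular normal cone through the linear isomorphism $\Delta_\sigma$ (whose adjoint sends $(w,-z)$ to $(\sigma w-z,w)$) is a valid alternative to the paper's route, which instead uses the identity $w\in\widehat{D}^*F^{-1}(v,x)(z)\Longleftrightarrow -z\in\widehat{D}^*F(x,v)(-w)$ followed by the regular coderivative sum rule for $T+\sigma I$; both yield \eqref{coderivative}, and your use of Proposition~\ref{prop:GF} to pass from $R_\sigma$ to $(T+\sigma I)^{-1}$ is exactly right.

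Step 1, however, contains a genuine gap --- precisely the point you flag as the ``main obstacle'' and do not resolve. Local hypomonotonicity only provides a globally hypomonotone $\bar T$ with $\gph\bar T\cap(U\times V)=\gph T\cap(U\times V)$; the shift $\bar T+\sigma I$ is then $(\sigma-r)$-strongly monotone but in general \emph{not} maximal, and no extension procedure (Proposition~\ref{lem:extend}(i) applied to the truncated graph, or otherwise) can be guaranteed to produce a maximal monotone operator that still agrees with $T+\sigma I$ on a full neighborhood: the maximal extension may be forced to add graph points \emph{inside} that neighborhood. A concrete obstruction: take $X=\R$ and $\gph T=\{(0,0)\}$. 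This $T$ has closed graph and is locally hypomonotone around $(0,0)$ with any modulus, yet $\gph(T+\sigma I)^{-1}=\{(0,0)\}$, so no graphical localization of $(T+\sigma I)^{-1}$ can ``coincide near the reference point with the single-valued continuous map $(\bar T+\sigma I)^{-1}$'' defined on all of $X$, as your Step~1 asserts. In effect you are proving that $\dom R_\sigma$ covers a neighborhood of $\ov+\sigma\ox$, which is strictly stronger than the proposition claims and false under its hypotheses; that local surjectivity only becomes available in Theorem~\ref{theo:main}, where the coderivative positivity condition delivers the Lipschitz-like property of $R_\sigma$.

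The repair is much simpler and is what the paper does: no maximal monotone representative is needed. Define $R_\sigma$ by $\gph R_\sigma=\gph(T+\sigma I)^{-1}\cap\Delta_\sigma(W)$, where $W$ is the neighborhood on which the hypomonotonicity inequality $\la v_1-v_2,x_1-x_2\ra\ge -r\|x_1-x_2\|^2$ holds for pairs in $\gph T\cap W$. For $(v_i,x_i)\in\gph R_\sigma$ one has $(x_i,v_i-\sigma x_i)\in\gph T\cap W$, hence
\begin{equation*}
\la v_1-v_2,x_1-x_2\ra\ge(\sigma-r)\|x_1-x_2\|^2,
\end{equation*}
and the Cauchy--Schwarz inequality gives $\|v_1-v_2\|\ge(\sigma-r)\|x_1-x_2\|$, so $R_\sigma(v)$ contains at most one point for each $v\in\dom R_\sigma$. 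With this replacement of your Step~1, the rest of your argument goes through.
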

\begin{proof} 
As $T$ is locally hypomonotone around $(\ox,\ov)$ with modulus $r>0$, there exists a neighborhood $W\subset X\times X$ of $(\ox,\ov)$ such that 
\begin{equation} \label{hypolocalmono}
\langle v_1 -v_2,x_1-x_2\rangle \geq -r\|x_1-x_2\|^2 \quad \text{for all }\; (x_1,v_1),(x_2,v_2)\in\text{\rm gph}T\cap W.
\end{equation} 
Pick any $\sigma >r$. Since $\Delta_{\sigma}$ is continuous with $(\ov+\sigma \ox,\ox)=\Delta_\sigma (\ox,\ov)$, we have from \eqref{Delta} that $(\ov+\sigma \ox,\ox)\in \gph (T+\sigma I)^{-1}$, and $\Delta_\sigma (W)$ is a neighborhood of $(\ov+\sigma \ox,\ox)$ as $W$ is open. Let $R_{\sigma}$ be the graphical localization of $(T+\sigma I)^{-1}$ with respect to $\Delta_\sigma (W)$ given by
\begin{equation}\label{R_lam}
\gph R_\sigma = \gph (T+\sigma I)^{-1} \cap \Delta_\sigma (W).
\end{equation}
We claim that $R_\sigma$ is single-valued on its domain. For any $(v_i,x_i)\in \text{\rm gph}R_\sigma$, $i=1,2$, it follows from \eqref{Delta-inv}, \eqref{Delta}, and \eqref{R_lam} that
\begin{equation*}
(x_i,v_i-\sigma x_i) = \Delta^{-1}_{\sigma} (v_i,x_i) \in \Delta^{-1}_{\sigma} (\gph R_{\sigma}) = \Delta^{-1}_{\sigma}\big(\gph (T+\sigma I)^{-1}\big) \cap \Delta^{-1}_{\sigma}\big( \Delta_\sigma (W)\big) = \gph T \cap W.
\end{equation*}
Combining the latter with \eqref{hypolocalmono} tells us that 
$$
\langle v_1 -\sigma x_1 - (v_2 -\sigma x_2), x_1- x_2\rangle \ge -r \|x_1-x_2\|^2,
$$
which implies in turn that 
$$
\langle v_1 -v_2,x_1 -x_2 \rangle \geq (\sigma - r)\|x_1-x_2\|^2.
$$
Applying the Cauchy-Schwarz inequality yields the estimate
$$
\|v_1-v_2\|\geq (\sigma - r)\|x_1-x_2\|,
$$
which implies $R_\sigma (v)$ has merely one element for all $v \in \dom R_{\sigma}$.\vspace*{0.03in}

Next we justify \eqref{coderivative}. Picking $v \in \dom R_\sigma$ and $w \in \widehat{D}^*R_\sigma (v) (z)$ gives us by \eqref{reg-cod} that
$$
(w,-z)\in \widehat{N}\big((v,R_{\sigma}(v));\gph R_{\sigma}\big).
$$
It is easy to deduce from definition \eqref{rnc} of the regular normal cone that 
$$
\begin{aligned}
(w,-z)\in \widehat{N}\big((v,R_{\sigma}(v));\gph R_{\sigma}\big) &= \widehat{N}\big((v,R_{\sigma}(v));\gph (T+\sigma I)^{-1} \cap \Delta_{\sigma}(W)\big) \\
&= \widehat{N}\big((v,R_{\sigma}(v));\gph (T+\sigma I)^{-1}\big)
\end{aligned}
$$
since $\Delta_{\sigma}(W)$ is an open set containing $(v,R_{\sigma}(v))$. Therefore,
$$
w \in \widehat{D}^*(T+\sigma I)^{-1}(v, R_\sigma(v))(z),
$$
which can be clearly rewritten as 
\begin{equation}\label{P1}
-z \in \widehat{D}^* (T+\sigma I)(R_\sigma (v), v)(-w).
\end{equation}
Employing the coderivative sum rule from \cite[Theorem~1.62(i)]{Mordukhovich06}) says that
\begin{eqnarray}
\widehat{D}^* (T+\sigma I)(R_\sigma (v), v)(-w) &=& \widehat{D}^* T(R_\sigma (v), v- \sigma R_\sigma (v))(-w) + \nabla (\sigma I)(R_{\sigma}(v))^* (-w) \notag \\
&=& \widehat{D}^* T(R_\sigma (v), v- \sigma R_\sigma (v))(-w)-\sigma w. \label{P2}
\end{eqnarray}
Finally, it follows from \eqref{P1} and \eqref{P2} that $-z +\sigma w \in \widehat{D}^* T(R_\sigma (v), v- \sigma R_\sigma (v))(-w)$, which completes the proof of the proposition. 
\end{proof}\vspace*{-0.03in}

Before establishing the main result of this section, we need to recall one more property of multifunction. Namely, $F:X\rightrightarrows X$ is {\em Lipschitz-like} (or has the Aubin property) around $(\ox,\ov)\in \gph F$ with modulus $\ell \ge 0$ if there are neighborhoods $U$ of $\ox$ and $V$ of $\ov$ such that 
\begin{equation}\label{defi:Aubin}
F(x) \cap V \subset F(u) + \ell \|x-u\| \mathbb{B}\;\text{ for all }\;x,u\in U,
\end{equation}
where $\B$ stands for the unit ball in $X$. Variational analysis achieves complete characterizations of the Lipschitz-like property in both finite and infinite dimensions known as the coderivative/Mordukhovich criteria; see \cite{Mordukhovich06,Rockafellar98} and the reference therein.\vspace*{0.03in}

Now we are ready to derive the aforementioned coderivative characterizations of local maximal monotonicity of set-valued mappings in Hilbert spaces.\vspace*{-0.05in}

\begin{Theorem}\label{theo:main} Let $T:X\rightrightarrows X$ be of locally closed graph around $(\ox,\ov)\in \gph T$. Then the following assertions are equivalent:

{\bf(i)} $T$ is locally maximal monotone around $(\ox,\ov)$.

{\bf(ii)} $T$ is locally hypomonotone around $(\ox,\ov)$ with some modulus $r>0$, and there exists a neighborhood $W$ of $(\ox,\ov)$ such that  
\begin{eqnarray}\label{eq:5.2D}
\la z,w\ra \ge 0\ \text{ whenever }\ z\in D^*_M T(u,v)(w)\ \text{ and } \ (u,v)\in \gph T \cap W.
\end{eqnarray}

{\bf(iii)} $T$ is locally hypomonotone around $(\ox,\ov)$ with some modulus $r>0$, and there exists a neighborhood $W$ of $(\ox,\ov)$ such that
\begin{eqnarray}\label{eq:5.2*}
\la z,w\ra \ge 0\ \text{ whenever }\ z\in \Hat{D}^* T(u,v)(w)\ \text{ and } \ (u,v)\in \gph T \cap W.
\end{eqnarray}
\end{Theorem}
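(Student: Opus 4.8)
The plan is to prove the cyclic chain of implications (i)$\Rightarrow$(ii)$\Rightarrow$(iii)$\Rightarrow$(i), exploiting the vertical transvection $\Delta_\sigma$ from \eqref{shear+reflect} together with the global characterization in Lemma~\ref{prop:mixed} applied to suitable graphical localizations. Throughout, the key conversion device is \eqref{Delta}, which turns questions about $T$ into questions about the resolvent-type operator $(T+\sigma I)^{-1}$, and vice versa, combined with the localization principle of Proposition~\ref{prop:GF} (coderivatives depend only on the graph near the reference point).

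\textbf{Step 1: (i)$\Rightarrow$(ii).}
Assume $T$ is locally maximal monotone around $(\ox,\ov)$. Then $T$ is in particular locally monotone, hence locally hypomonotone with any modulus $r>0$, so the first part of (ii) is immediate. For the coderivative inequality, pick a maximal monotone $\bar T$ and a neighborhood $U\times V$ with $\gph\bar T\cap(U\times V)=\gph T\cap(U\times V)$; since $T$ has locally closed graph we may assume $\gph\bar T$ is closed (it already is, being maximal monotone). Apply Lemma~\ref{prop:mixed} to $\bar T$: the mixed coderivative $D^*_M\bar T(u,v)$ is positive-semidefinite at \emph{every} point of $\gph\bar T$. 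Now invoke Proposition~\ref{prop:GF} with $W$ the interior of $U\times V$: for $(u,v)\in\gph T\cap W$ we have $D^*_M T(u,v)(w)=D^*_M\bar T(u,v)(w)$, so the inequality $\langle z,w\rangle\ge 0$ in \eqref{eq:5.2D} is inherited from $\bar T$. This gives (ii).

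\textbf{Step 2: (ii)$\Rightarrow$(iii).}
This is the easy inclusion in the coderivative hierarchy: $\Hat D^*T(u,v)(w)\subset D^*_M T(u,v)(w)$ always (take the trivial constant sequence in \eqref{mix-lim-cod}). Hence the inequality over the larger set $D^*_M$ in \eqref{eq:5.2D} forces the inequality over the smaller set $\Hat D^*$ in \eqref{eq:5.2*}, using the same neighborhood $W$ (or a smaller one); local hypomonotonicity carries over verbatim.

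\textbf{Step 3: (iii)$\Rightarrow$(i), the main obstacle.}
This is where the real work lies. Assume local hypomonotonicity with modulus $r>0$ and the regular-coderivative inequality \eqref{eq:5.2*} on a neighborhood $W$. Fix $\sigma>r$ and form the single-valued localization $R_\sigma$ of $(T+\sigma I)^{-1}$ furnished by Proposition~\ref{transform}, together with implication \eqref{coderivative}. The strategy is to show $R_\sigma$ has a \emph{continuous single-valued localization} around $(\ov+\sigma\ox,\ox)$, because then $\big(I+\sigma^{-1}(T-\sigma\cdot+\sigma\cdot)\big)$-type bookkeeping—concretely, rescaling via $J=I$ in the Hilbert setting and applying Theorem~\ref{theo:loc-Minty}(ii)$\Rightarrow$(i) with $\lambda=\sigma^{-1}$ to the shifted operator—yields local maximal monotonicity of $T$. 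To get continuity of $R_\sigma$, I would establish that $R_\sigma$ is \emph{Lipschitz-like} around the reference point via the Mordukhovich (coderivative) criterion: it suffices that $\Hat D^*R_\sigma(v)(0)=\{0\}$ locally, or more robustly that the mixed/normal coderivative of $R_\sigma$ at $(\ov+\sigma\ox,\ox)$ contains no nonzero element mapping $0$ to something. Here is the crux: from \eqref{coderivative}, $w\in\Hat D^*R_\sigma(v)(z)$ gives $-z+\sigma w\in\Hat D^*T(R_\sigma(v),v-\sigma R_\sigma(v))(-w)$; pairing with $-w$ and using \eqref{eq:5.2*} (applicable since $(R_\sigma(v),v-\sigma R_\sigma(v))\in\gph T\cap W$ for $v$ near the reference point) yields $\langle -z+\sigma w,-w\rangle\ge 0$, i.e. $\langle z,w\rangle\ge\sigma\|w\|^2$. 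Setting $z=0$ forces $w=0$, which is exactly the coderivative criterion for the Lipschitz-like property of $R_\sigma$, hence (being also single-valued) its continuity. Passing this continuous single-valued localization back through Theorem~\ref{theo:loc-Minty} delivers local maximal monotonicity of $T+ (\sigma - r')I$-type shifts and, after undoing the shift using Proposition~\ref{coro:R-strongR}(ii) (or directly the shear $\Phi_{-\sigma}$ reasoning), of $T$ itself around $(\ox,\ov)$. The delicate points to handle carefully are: confirming that $v$ ranging in a neighborhood keeps $(R_\sigma(v),v-\sigma R_\sigma(v))$ inside $W$ (continuity of $R_\sigma$ is needed here, so one must bootstrap or shrink neighborhoods appropriately), and verifying that the single-valued \emph{continuous} localization of the resolvent translates under $\Psi_\lambda^{-1}$ back to a genuine neighborhood of $(\ox,\ov)$—both are the kind of neighborhood-chasing already rehearsed in the proof of Theorem~\ref{theo:loc-Minty}, so the argument is structurally the same, just applied to the localized operator $R_\sigma$.
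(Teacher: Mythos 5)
Your overall architecture matches the paper's: (i)$\Rightarrow$(ii) via Lemma~\ref{prop:mixed} applied to a global maximal monotone extension combined with the graph-localization invariance of Proposition~\ref{prop:GF}; (ii)$\Rightarrow$(iii) by the inclusion $\Hat{D}^*T\subset D^*_M T$; and for (iii)$\Rightarrow$(i) you correctly extract from Proposition~\ref{transform} the key estimate $\la z,w\ra\ge\sigma\|w\|^2$ for $w\in\Hat{D}^*R_\sigma(y)(z)$, hence $\|w\|\le\sigma^{-1}\|z\|$ and the Lipschitz-like property of $R_\sigma$ with modulus $\sigma^{-1}$ via the regular coderivative criterion. (A minor imprecision: in Hilbert space the injectivity condition $\Hat{D}^*R_\sigma(v)(0)=\{0\}$ alone is not the criterion; one needs the uniform bound $\|w\|\le\sigma^{-1}\|z\|$ over a neighborhood, which you do in fact have.)

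The genuine gap is at the very end of Step 3. Theorem~\ref{theo:loc-Minty}(ii) requires $T$ to be \emph{locally monotone} around $(\ox,\ov)$, and assertion (iii) only supplies local \emph{hypo}monotonicity; you never upgrade the latter to the former. Your proposed detour --- apply Theorem~\ref{theo:loc-Minty} to a shifted operator and then ``undo the shift'' via Proposition~\ref{coro:R-strongR}(ii) or $\Phi_{-\sigma}$ --- does not close this gap: $\Phi_{-c}$ carries maximal monotone sets to maximal monotone sets only when the set is $c$-strongly monotone to begin with, and here $T+\sigma I$ is merely $(\sigma-r)$-strongly monotone, so the most you could undo is a shift by $\sigma-r$, landing on $T+rI$ rather than $T$; local maximal monotonicity of $T+rI$ does not yield that of $T$. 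The paper closes the gap with a separate argument (its Claim~2): the Lipschitz-like property of $R_\sigma$ with the \emph{explicit} modulus $\sigma^{-1}$ gives, for graph points $(u_i,v_i)$ of $T$ near $(\ox,\ov)$, the inequality $\|u_1-u_2\|\le\sigma^{-1}\|v_1-v_2+\sigma(u_1-u_2)\|$, which after squaring yields
\begin{equation*}
\la v_1-v_2,u_1-u_2\ra\ge-(2\sigma)^{-1}\|v_1-v_2\|^2 ,
\end{equation*}
and since $\sigma>r$ is arbitrary one lets $\sigma\to\infty$ to obtain $\la v_1-v_2,u_1-u_2\ra\ge 0$, i.e.\ local monotonicity of $T$. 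Note that the right-hand side here involves $\|v_1-v_2\|^2$, not $\|u_1-u_2\|^2$: this is exactly why the bound improves as $\sigma\to\infty$, whereas any argument that only records hypomonotonicity of $T$ with modulus $\sigma$ deteriorates. Without this quantitative use of the modulus $\sigma^{-1}$, the implication (iii)$\Rightarrow$(i) does not go through.
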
\vspace*{-0.2in}
\begin{proof} 
To verify (i)$\Longrightarrow$(ii), we can suppose by the equivalence of Theorem~\ref{theo:loc-Minty} that $T$ is locally maximal monotone of type $(B)$ relative to a neighborhood $W$ of $(\ox,\ov)$. Thus Definition~\ref{defi:local-max-R}(ii) gives us a globally maximal monotone mapping $G:X\rightrightarrows X$ of closed graph such that
\begin{eqnarray}\label{GT}
\gph G \cap W = \gph T \cap W.
\end{eqnarray}
Pick $(u,v)\in \gph T \cap W$ and $(z,w)\in X\times X$ with $z\in D^*_M T(u,v)(w)$ and get by \eqref{GT} and Proposition~\ref{prop:GF} that $z\in D^*_M G(u,v)(w)$. Combining the latter with $(u,v)\in \gph G$ and the maximal monotonicity of $G$ on $X$, we deduce from Lemma~\ref{prop:mixed} that $\langle z,w\rangle \ge 0$, which justifies the claimed implication. The next implication (ii)$\Longrightarrow$(iii) immediately follows from the fact that the regular coderivative is always contained in the mixed one. 

Let us now prove the most challenging implication (iii) $\Longrightarrow$(i). Assuming that $T$ is locally hypomonotone around $(\ox,\ov)$ with modulus $r>0$ and the fulfillment of condition \eqref{eq:5.2*}, we deduce from Proposition~\ref{transform} that for all $\sigma >r$ the set-valued mapping $R_\sigma: X\rightrightarrows X$ defined in \eqref{R_lam} is single-valued on $\dom R_\sigma$, and that for all $y\in \dom R_{\sigma}$ we have the implication 
\begin{equation}\label{inclucoderivative}
w \in \widehat{D}^*R_\sigma (y)(z) \Longrightarrow -z + \sigma w \in \widehat{D}^*T(R_\sigma(y),y -\sigma R_\sigma (y))(-w),  
\end{equation} 
where $\Delta_{\sigma}(u,v)$ is taken from \eqref{shear+reflect}. We split the rest of the proof into three claims.\\[1ex]
{\bf Claim~1:} {\em $R_\sigma$ is Lipschitz-like \eqref{defi:Aubin} around $(\ov + \sigma \ox,\ox)$ with modulus $\sigma^{-1}$}. To justify this claim, pick $(y,R_{\sigma}(y))\in \gph R_{\sigma}\cap \Delta_{\sigma}(W)$ and $w\in \widehat{D}^*R_{\sigma}(y)(z)$ remembering that $\Delta_\sigma(W)$ is a neighborhood of $(\ov + \sigma \ox,\ox)$. It follows from \eqref{inclucoderivative} that
\begin{equation}\label{5.6i}
-z + \sigma w \in \widehat{D}^*T(R_\sigma(y),y -\sigma R_\sigma (y))(-w)  .
\end{equation}
By $(y,R_{\sigma}(y))\in\Delta_{\sigma}(W)$, we also observe that 
\begin{equation}\label{5.6ii}
(R_\sigma(y),y -\sigma R_\sigma (y))=\Delta^{-1}_{\sigma}(y,R_{\sigma}(y))\in\Delta^{-1}_{\sigma}(\Delta_{\sigma}(W))= W.
\end{equation}
Combining \eqref{eq:5.2*}, \eqref{5.6i}, and \eqref{5.6ii} gives us $\langle z, w\rangle \geq \sigma \|w\|^2$, and hence
\begin{equation}\label{Lips}
\|w\| \le\sigma^{-1}\|z\|\;\text{ whenever }\; w \in   \widehat{D}^*R_\sigma (y)(z).
\end{equation}
Since \eqref{Lips} holds for all $(y,R_{\sigma}(y))\in \gph R_{\sigma} \cap \Delta_{\sigma}(W)$ and all $z\in X$ with $w\in \widehat{D}^*R_{\sigma}(y)(z)$, it follows from the regular coderivative criterion in \cite[Theorem~4.7]{Mordukhovich06} that $R_\sigma $ is Lipschitz-like around $(\ov + \sigma \ox,\ox)$ with modulus $\sigma^{-1}$ as claimed.\\[1ex]
{\bf Claim~2:} {\em $T$ is locally monotone around $(\ox,\ov)$.} Indeed, it follows from Claim~1 that there exist neighborhoods $V$ of $\ov+\sigma \ox$ and $U$ of $\ox$ for which 
\begin{equation}\label{Llike}
R_\sigma (y_1) \cap U \subset R_\sigma (y_2) + \sigma^{-1}\|y_1-y_2\|\mathbb{B}\;\text{ whenever }\;y_1,y_2 \in V.  
\end{equation}
Furthermore, we have the condition 
\begin{equation}\label{localizationmono}
R_\sigma (y) \cap U \ne\emptyset\;\text{ for all }\;y \in V.
\end{equation}
As $\Delta_{\sigma}^{-1}(\ov+\sigma \ox,\ox)=(\ox,\ov)$ and $W$ is an open set containing $(\ox,\ov)$, we may shrink $U$ and $V$ if necessary such that $\Delta_{\sigma}^{-1}(V\times U)\subset W$. Picking two pairs $(u_i,v_i)\in\text{\rm gph}T\cap \Delta_{\sigma}^{-1}(V\times U)$, $i=1,2$, brings us to the equalities 
\begin{eqnarray*}
(v_i+\sigma u_i,u_i) &=& \Delta_{\sigma}(u_i,v_i)\in \Delta_{\sigma} \big( \gph T \cap \Delta_{\sigma}^{-1}(V\times U)\big) =\Delta_{\sigma} (\gph T) \cap \Delta_{\sigma}\big(\Delta_{\sigma}^{-1}(V\times U)\big) \\
&=& \gph (T+\sigma I)^{-1} \cap (V\times U) = \gph (T+\sigma I)^{-1} \cap (V\times U) \cap \Delta_{\sigma}(W) \\
&=& \gph R_\sigma \cap (V\times U).
\end{eqnarray*}
Since $R_\sigma (y)$ has at most one element for each $y \in X$, it follows from \eqref{Llike} that
$$
u_1 \in u_2 + \sigma^{-1}\|(v_1+\sigma u_1)-(v_2+\sigma u_2)\|\mathbb{B}
$$
which implies in turn that 
$$
\|u_1- u_2\| \le\sigma^{-1}\|v_1-v_2+\sigma(u_1-u_2)\|.
$$
Therefore, we arrive at the estimate
$$
\sigma^2\|u_1-u_2\|^2 \leq \|v_1-v_2\|^2 +2\sigma \langle v_1 - v_2 , u_1 -u_2\rangle + \sigma^2 \|u_1-u_2\|^2,
$$
which consequently gives us the condition 
$$
\langle v_1-v_2,u_1-u_2\rangle \ge-(2\sigma)^{-1}\|v_1-v_2\|^2 .
$$
By the arbitrary choice of $\sigma\in(r,\infty)$, we can let $\sigma \to \infty$ in the above inequality and get $$
\langle v_1-v_2,u_1-u_2\rangle \geq 0,
$$ 
which justifies the local  monotonicity of $T$ around $(\ox,\ov)$.\\[1ex]
{\bf Claim~3:}  {\em The mapping $(T+\sigma I)^{-1}$ admits a single-valued localization around $(\ov+\sigma \ox,\ox)$.} 
Since $V\times U\subset \Delta_{\sigma}(W)$ and the mapping $R_{\sigma}$ defined in \eqref{R_lam} is single-valued on its domain, we conclude that $R_{\sigma}|_V$ is also single-valued due to \eqref{localizationmono}, which justifies Claim~3. Employing finally Theorem~\ref{theo:loc-Minty} ensures that $T$ is locally maximal monotone around $(\ox,\ov)$. 
\end{proof}\vspace*{-0.05in}

As a consequence of Theorem~\ref{theo:main}, we now derive the following complete coderivative characterizations of {\em local strong maximal monotonicity} of set-valued operators in Hilbert spaces. Note that the results below have been obtained in \cite{MordukhovichNghia1} independently of the characterizations of local maximal monotonicity in Theorem~\ref{theo:main} by using an essentially more involved device.\vspace*{-0.05in} 

\begin{Corollary}\label{coro:strong-code}
Let $T:X\rightrightarrows X$ be a set-valued mapping of closed graph around $(\ox,\ov)\in \gph T$. Then for any $\sigma >0$, we have the equivalent assertions:

{\bf(i)} $T$ is locally strongly maximal monotone around $(\ox,\ov)$ with modulus $\sigma$.

{\bf(ii)} $T$ is locally hypomonotone around $(\ox,\ov)$, and there is a neighborhood $W$ of $(\ox,\ov)$ with \begin{eqnarray*}
\la z,w\ra \ge \sigma \|w\|^2\ \text{ whenever }\ z\in D^*_M T(u,v)(w)\ \text{ and } \ (u,v)\in \gph T \cap W.
\end{eqnarray*}

{\bf(iii)} $T$ is locally hypomonotone around $(\ox,\ov)$, and there is a neighborhood $W$ of $(\ox,\ov)$ with \begin{eqnarray}\label{eq:D*T}
\la z,w\ra \ge \sigma \|w\|^2\ \text{ whenever }\ z\in \Hat{D}^*T(u,v)(w)\ \text{ and } \ (u,v)\in \gph T \cap W.
\end{eqnarray}
\end{Corollary}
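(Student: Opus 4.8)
The plan is to obtain Corollary~\ref{coro:strong-code} from Theorem~\ref{theo:main} by applying the vertical shear transformation $\Phi_{\sigma}$ of \eqref{eq:shear} (here $X$ is Hilbert, so $J=I$ and $\Phi_{\sigma}(x,v)=(x,v+\sigma x)$) to move between $T$ and its shift $T-\sigma I$. The key bridging observations are: first, by Proposition~\ref{coro:R-strongR} (or directly via Remark~\ref{phi} together with Theorem~\ref{coro:loc-Minty-strong}), $T$ is locally strongly maximal monotone around $(\ox,\ov)$ with modulus $\sigma$ if and only if $T-\sigma I$ is locally maximal monotone around $(\ox,\ov-\sigma\ox)$; second, since hypomonotonicity is preserved under adding a multiple of the identity (if $\bar T+rI$ is monotone then $(\bar T-\sigma I)+(r+\sigma)I$ is monotone), the operator $T$ is locally hypomonotone around $(\ox,\ov)$ if and only if $T-\sigma I$ is locally hypomonotone around $(\ox,\ov-\sigma\ox)$, with an adjusted modulus; and third, the coderivative sum rule for a smooth summand, namely $\widehat D^*(T-\sigma I)(u,v-\sigma u)(w)=\widehat D^*T(u,v)(w)-\sigma w$ and likewise for $D^*_M$ (this is \cite[Theorem~1.62(i)]{Mordukhovich06}, exactly as used in the proof of Proposition~\ref{transform}).

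First I would apply Theorem~\ref{theo:main} to the mapping $T-\sigma I$ at the point $(\ox,\ov-\sigma\ox)$, noting that $\gph(T-\sigma I)$ is locally closed around this point because $\gph T$ is locally closed around $(\ox,\ov)$ and $\Phi_{\sigma}$ is a homeomorphism. This yields the equivalence of: (i$'$) $T-\sigma I$ locally maximal monotone around $(\ox,\ov-\sigma\ox)$; (ii$'$) $T-\sigma I$ locally hypomonotone with some modulus and $\la z,w\ra\ge0$ whenever $z\in D^*_M(T-\sigma I)(u',v')(w)$ for $(u',v')\in\gph(T-\sigma I)\cap W'$; and (iii$'$) the analogous statement with $\widehat D^*$. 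Then I would translate each of (i$'$), (ii$'$), (iii$'$) back to statements about $T$. For (i$'$), use the strong/weak shift equivalence recalled above. For (ii$'$) and (iii$'$), the change of variables $(u',v')=(u,v-\sigma u)$ ranges over $\gph(T-\sigma I)\cap W'$ exactly when $(u,v)$ ranges over $\gph T\cap W$ with $W=\Phi_{\sigma}^{-1}(W')$ a neighborhood of $(\ox,\ov)$; and the sum rule converts $z\in D^*_M(T-\sigma I)(u,v-\sigma u)(w)$ into $z+\sigma w\in D^*_M T(u,v)(w)$. Hence the inequality $\la z,w\ra\ge0$ becomes $\la z'-\sigma w,w\ra\ge0$, i.e. $\la z',w\ra\ge\sigma\|w\|^2$, for $z'\in D^*_M T(u,v)(w)$, which is precisely condition (ii) of the corollary; the same computation with $\widehat D^*$ gives condition (iii). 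Finally, combine with the hypomonotonicity transfer to see that ``$T-\sigma I$ locally hypomonotone'' is equivalent to ``$T$ locally hypomonotone,'' so the hypomonotonicity clauses match up as well, and conclude.

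The main obstacle, I expect, is purely bookkeeping rather than conceptual: one must be careful that the various neighborhoods $W$ appearing in (ii) and (iii) for $T$ and for $T-\sigma I$ correspond correctly under $\Phi_{\sigma}$ (they do, since $\Phi_{\sigma}$ is a homeomorphism sending open sets to open sets and $(\ox,\ov)$ to $(\ox,\ov+\sigma\ox)$, and one applies the theorem at the shifted point), and that the modulus in the hypomonotonicity clause is allowed to be ``some $r>0$'' on both sides so that the shift by $\pm\sigma I$ causes no trouble. There is also a small point that Corollary~\ref{coro:strong-code} hypothesizes ``closed graph around $(\ox,\ov)$'' while Theorem~\ref{theo:main} says ``locally closed graph around $(\ox,\ov)$''; these are the same condition, so no extra work is needed. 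One should also record once that, since $X$ is Hilbert, reflexivity and Fr\'echet smoothness of the norm hold automatically, so all the ambient hypotheses of Theorem~\ref{coro:loc-Minty-strong}, Proposition~\ref{coro:R-strongR}, and the results invoked are in force.
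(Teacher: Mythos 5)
Your proposal is correct and follows essentially the same route as the paper: both reduce the corollary to Theorem~\ref{theo:main} applied to the shifted operator $T-\sigma I$ at $(\ox,\ov-\sigma\ox)$, using Propositions~\ref{prop:P-strP}(ii) and \ref{coro:R-strongR}(i) for the equivalence of assertion (i) with local maximal monotonicity of the shift, the coderivative sum rules for the smooth summand $\sigma I$ to convert the positive-semidefiniteness conditions, and the homeomorphism $\Phi_{\pm\sigma}$ to match neighborhoods. The only cosmetic difference is that the paper argues by the cycle (i)$\Longrightarrow$(ii)$\Longrightarrow$(iii)$\Longrightarrow$(i) rather than transferring all three assertions at once, and it cites a separate sum rule for the mixed coderivative; these do not affect correctness.
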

\begin{proof} Assume (i) and show that (ii) holds. Indeed, Proposition~\ref{prop:P-strP}(ii) implies that $T-\sigma I$ is locally maximal monotone around $(\ox,\ov-\sigma \ox)$. Applying Theorem~\ref{theo:main} gives us a neighborhood $Q$ of $(\ox,\ov-\sigma \ox)$ such that 
\begin{equation}\label{7.5}
\la x,y\ra \ge 0\ \text{ for all }\ x\in D^*_M (T-\sigma I)(s,t)(y)\ \text{ and } \ (s,t)\in \gph (T-\sigma I) \cap Q. 
\end{equation}
Since $\Phi_{\sigma}(\ox,\ov-\sigma \ox) = (\ox,\ov)$ and the set $Q\ni (\ox,\ov-\sigma \ox)$ is open, its image $W:=\Phi_{\sigma}(Q)$ is a neighborhood of $(\ox,\ov)$. Picking $(u,v)\in \gph T \cap W$ and $z\in D^*_M T(u,v)(w)$, let us check that $\la z,w\ra \ge \sigma \|w\|^2$ by using \eqref{7.5}. To proceed, for $(u,v)\in \gph T \cap W$ we have
\begin{eqnarray*}
(u,v-\sigma u) &=& \Phi_{-\sigma}(u,v)\in \Phi_{-\sigma}(\gph T \cap W) \\
&=& \Phi_{-\sigma}(\gph T) \cap \Phi_{-\sigma} (W) = \gph (T-\sigma I) \cap \Phi_{-\sigma}(\Phi_{\sigma}(Q)) = \gph (T-\sigma I) \cap Q.
\end{eqnarray*}
Employing the coderivative sum rule from \cite[Theorem~3.10]{Mordukhovich18} with $F_1:=T-\sigma I$, $F_2:=I$, $(u,v)\in \gph T$, and $w\in X$ ensures that
\begin{eqnarray*}
\begin{array}{ll}
D^*_M T(u,v)(w)&\subset D^*_M (T-\sigma I)(u,v-\sigma u)(w) + D^*_M (\sigma I) (u,\sigma u)(w)\\
&= D^*_M (T-\sigma I)(u,v-\sigma u)(w) +\sigma w,
\end{array}
\end{eqnarray*}
which results in the inclusions 
$$
z-\sigma w\in D^*_M T(u,v)(w)-\sigma w \subset D^*_M (T-\sigma I)(u,v-\sigma u)(w).
$$ 
As $(u,v-\sigma u)\in \gph (T-\sigma I) \cap Q$ and $z-\sigma w \in D^*_M (T-\sigma I)(u,v-\sigma u)(w)$, \eqref{7.5} yields
\begin{eqnarray*}
\la z-\sigma w,w\ra \ge 0,\;\mbox{ i.e.,}\;\la z,w\ra \ge \sigma\|w\|^2,
\end{eqnarray*}
and thus justifies (i)$\Longrightarrow$(ii). Since (ii) $\Longrightarrow$(iii) is trivial, it remains to verify 
(iii)$\Longrightarrow$(i).

To this end, suppose that $T$ is locally hypomonotone around $(\ox,\ov)$ and there exists a neighborhood $W$ such that \eqref{eq:D*T} holds. The hypomonotonicity of $T$ around $(\ox,\ov)$ ensures the one for $T-\sigma I$ around $(\ox,\ov-\sigma \ox)$. By the regular coderivative sum rule from \cite[Theorem~1.62(i)]{Mordukhovich06} and elementary calculations, we check that 
\begin{equation}\label{D^T-}
\la x,y\ra \ge 0 \ \text{ whenever } \ x\in \widehat{D}^* (T-\sigma I)(s,t)(y) \ \text{ and } \ (s,t)\in \gph (T-\sigma I) \cap \Phi_{-\sigma}(W).
\end{equation}
Due to \eqref{D^T-} and the local hypomonotonicity of $T-\sigma I$ around $(\ox,\ov-\sigma \ox)$, Theorem~\ref{theo:main} gives us the local maximal monotonicity of $T-\sigma I$ around $(\ox,\ov-\sigma \ox)$. Finally, we deduce from Proposition~\ref{coro:R-strongR}(i) that the original mapping $T$ is locally strongly maximal monotone around $(\ox,\ov)$ with modulus $\sigma$, which therefore completes the proof. 
\end{proof}\vspace*{-0.2in}

\section{Concluding Remarks}\label{sec:conc}\vspace*{-0.05in}

The paper provides a systematic study of local maximal monotonicity and its strong counterparts for set-valued mappings in infinite-dimensional spaces, while all the established characterizations of these properties are new even in finite dimension. The study of local maximal monotonicity, besides being of its own interest, is highly motivated by applications to important problems of optimization and operations research by using powerful methods of variational analysis. A large portion of our future research will aim at applications to recent concepts of variational and strong variational convexity in both finite and infinite dimensions that have drawn a great interest in various aspects of optimization theory and numerical algorithms; see, e.g., \cite{kkmp23,kmp22convex,rtr251,r22}.\vspace*{-0.1in}


\begin{thebibliography}{10} 
\small 

\bibitem{Bauschke2011}H. H. Bauschke and P. L. Combettes, {\em Convex Analysis and Monotone Operator Theory in Hilbert Spaces}, 2nd edition. Springer, New York, 2017.

\bibitem{CBN} N. H. Chieu, G. M. Lee, B. S. Mordukhovich, T. T. A. Nghia, Coderivative characterizations of maximal monotonicity for set-valued mappings, {\em J. Convex Anal.} {\bf 23} (2016), 461--480.

\bibitem{Ciora} I. Cioranescu, {\em Geometry of Banach Spaces, Duality Mappings and Nonlinear Problems}, Kluwer, Dordrecht, 1990.

\bibitem{fabian} M. Fabian, P. Habala, P. H\'ajeck, V. Montesinos S., J. Pelant and V. Zizler, {\em Functional Analysis and Infinite-Dimensional Geometry}, 2nd edition, Springer, New York, 2011.

\bibitem{kkmp23} P. D. Khanh, V. V. H. Khoa, B. S. Mordukhovich and V. T. Phat, Variational convexity of functions in Banach spaces, in {\em Functional Analysis and Continuous Optimization}, J. M. Amig\'o, M. J. C\'anovas et al. (eds.), Springer Proceedings in Mathematics and Statistics {\bf 424}, pp. 237--260, Springer, Cham, Switzerland, 2023. 

\bibitem{BorisKhanhPhat} P. D. Khanh, B. S. Mordukhovich and V. T. Phat, A generalized Newton method for subgradient systems, {\em Math. Oper. Res.}, DOI: 10.1287/moor.2022.1320 (2022).

\bibitem{kmp22convex} P. D. Khanh, B. S. Mordukhovich and V. T. Phat, Variational convexity of functions and variational sufficiency in optimization, {\em SIAM J. Optim.} {\bf 33} (2023), 1121--1158.
		
\bibitem{kmptmp} P. D. Khanh, B. S. Mordukhovich, V. T. Phat and D. B. Tran, Globally convergent coderivative-based generalized Newton methods in nonsmooth optimization, {\em Math. Program.}, DOI: 10.1007/s10107-023-01980-2 (2023).

\bibitem{Minty62} G. J. Minty, Monotone (nonlinear) operators in Hilbert space, {\em Duke Math. J.} {\bf 29} (1962), 341–-346.

\bibitem{Mordukhovich06} B. S. Mordukhovich, {\em Variational Analysis and Generalized Differentiation, I: Basic Theory, II: Applications}, Springer, Berlin, 2006.

\bibitem{Mordukhovich18} B. S. Mordukhovich, {\em Variational Analysis and Applications}, Springer, Cham, Switzerland, 2018.

\bibitem{nam} B. S. Mordukhovich and N. M. Nam, {\em Convex Analysis and Beyond, I: Basic Theory}, Springer, Cham, Switzerland, 2022.

\bibitem{MordukhovichNghia1} B. S. Mordukhovich and T. T. A. Nghia, Local monotonicity and full stability of parametric variational systems, {\em SIAM J. Optim.} {\bf 26} (2016), 1032--1059.

\bibitem{ms21}  B. S. Mordukhovich and M. E. Sarabi, Generalized Newton algorithms for tilt-stable minimizers in nonsmooth optimization, {\em SIAM J. Optim.} {\bf 31} (2021), 1184--1214.

\bibitem{Pen02} T. Pennanen, Local convergence of the proximal point algorithm and multiplier methods without monotonicity, {\em Math. Oper. Res.} {\bf 27} (2002), 170--191.

\bibitem{Pen03} T. Pennanen, Inexact variant of the proximal point algorithm without monotonicity, {\em SIAM J. Optim.} {\bf13} (2003), 1080--1097. 

\bibitem{Poli} R. A. Poliquin and R. T. Rockafellar, Tilt-stability of a local minimum, {\em SIAM J. Optim.} {\bf 8} (1998), 287--299.

\bibitem{Rockafellar70} R. T. Rockafellar, On the maximality of sums of nonlinear monotone operators, {\em Trans. Amer. Math. Soc.} {\bf 149} (1970), 75--88.

\bibitem{Rockafellar70-Pac} R. T. Rockafellar, On the maximal monotonicity of subdifferential mappings, {\em Pacific J. Math.} {\bf 33} (1970), 209--216.

\bibitem{Rockafellar76} R. T. Rockafellar, Monotone operators and the proximal point algorithm, {\em SIAM J. Control Optim.} {\bf 14} (1976),  877--898.

\bibitem{rtr251} R. T. Rockafellar, Variational convexity and local monotonicity of subgradient mappings, {\em Vietnam J. Math.} {\bf 47} (2019), 547--561.

\bibitem{r22} R. T. Rockafellar, Convergence of augmented Lagrangian methods in extensions beyond nonlinear programming, {\em Math. Program.} {\bf 199} (2023), 375--420.

\bibitem{Rockafellar98} R. T. Rockafellar and R. J-B Wets, {\em Variational Analysis}, Springer, Berlin, 1998.

\bibitem{Simons08} S. Simons, {\em From Hahn-Banach to Monotonicity}, Lecture Notes in Mathematics, Springer,  Dordrecht, The Netherlands, 2008.

\bibitem{Voisei08} M. D. Voisei, The sum and chain rules for maximal monotone operators, {\em Set-Valued Anal.} {\bf 16} (2008), 461–-476.
\end{thebibliography}
\end{document}